\newtheorem{theo}{Theorem}[section]
\newtheorem{lemm}[theo]{Lemma}
\newtheorem{defi}[theo]{Definition}
\newtheorem{rema}[theo]{Remark}
\numberwithin{equation}{section}
\newcommand{\bal}{\begin{align}}
\newcommand{\bbal}{\begin{align*}}
\newcommand{\beq}{\begin{equation}}
\newcommand{\eeq}{\end{equation}}
\newcommand{\bca}{\begin{cases}}
\newcommand{\eca}{\end{cases}}
\newcommand{\pa}{\partial}
\newcommand{\na}{\nabla}
\newcommand{\De}{\Delta}
\newcommand{\cd}{\cdot}
\newcommand{\dd}{\mathrm{d}}
\newcommand{\R}{\mathbb{R}}
\newcommand{\D}{\mathrm{div}}
\newcommand{\Z}{\mathbb{Z}}
\begin{document}

\subjclass[2010]{76W05}
\keywords{The non-resistive MHD equations, non-uniform  continuous dependence, continuous dependence}

\title[Non-resistive MHD equations]{Non-uniform  continuous dependence and continuous dependence for the non-resistive MHD equations}

\author[J. Li]{Jinlu Li}
\address{School of Mathematics and Computer Sciences, Gannan Normal University, Ganzhou 341000, China}
\email{lijl29@mail2.sysu.edu.cn}

\author[Z. Yin]{Zhaoyang Yin}
\address{Department of Mathematics, Sun Yat-sen University, Guangzhou, 510275, China \& Faculty of Information Technology,
 Macau University of Science and Technology, Macau, China}
\email{mcsyzy@mail.sysu.edu.cn}

\author[W. Zhu]{Weipeng Zhu}
\address{School of Mathematics and Information Science, Guangzhou University, Guangzhou, 510006, China}
\email{mathzwp2010@163.com}

\begin{abstract}
In this paper, we establish the continuous dependence for the non-resistive MHD equations in Sobolev spaces. Our obtained result fills considerably the recent result \cite{F} for the continuous dependence on initial data. We also show that this result is optimal. More precisely, we proved that the data-to-solution map is not uniform  continuous dependence in Sobolev spaces.
\end{abstract}

\maketitle

\section{Introduction and main result}

In the paper, we consider the following Cauchy problem of the non-resistive MHD equations:
\begin{equation}\label{MHD}\begin{cases}
\partial_tu+u\cdot\nabla u-\Delta u+\nabla P=b\cdot \nabla b, \\
\partial_tb+u\cdot \nabla b=b\cdot\nabla u,\\
\mathrm{div} u=\mathrm{div} b=0,\quad (u,b)|_{t=0}=(u_0,b_0),
\end{cases}\end{equation}
where the unknowns are the vector fields  $u: \R\times \R^d\rightarrow \R^d$,  $b: \R\times \R^d\rightarrow \R^d$ and the scalar function $P$. Here, $u$ and $b$ are the velocity and magnetic, respectively, while $P$ denotes the pressure.

In \cite{Jiu06}, Jiu and Niu were first concerned with the well-posedness of system \eqref{MHD} and established the local existence of solutions in 2D for initial data in $H^s,\ s\geq 3$. Fefferman et al. obtained local-in-time existence of strong solutions to \eqref{MHD} in $\R^d,\ d=2,3$ with the initial data $(u_0,b_0)\in H^s(\mathbb{R}^d)\times H^s(\mathbb{R}^d),\ s>\frac d2$ in \cite{F} and $(u_0,b_0)\in H^{s-1+\varepsilon}(\mathbb{R}^d)\times H^s(\mathbb{R}^d),\ s>\frac d2,\ 0<\varepsilon<1$ in \cite{F1}. Chemin et al. in \cite{C} presented the local existence of solutions to \eqref{MHD} in $\R^d,\ d=2,3$ with the initial data $(u_0,b_0)\in B^{\frac d2-1}_{2,1}(\mathbb{R}^d)\times B^{\frac d2}_{2,1}(\mathbb{R}^d)$ and also proved the corresponding solution is unique in 3D case.  Wan in \cite{W} resolved the uniqueness of the solution in the 2D case by using mixed space-time Besov spaces. Recently, Li, Tan and Yin in \cite{L.T.Y} obtain the local existence and uniqueness in the homogeneous Besov spaces $(u_0,b_0)\in\dot{B}^{\frac dp-1}_{p,1}(\mathbb{R}^d)\times \dot{B}^{\frac dp}_{p,1}(\mathbb{R}^d)$ where $p\in[1,2d]$.

For the global existence of small solutions to the system \eqref{MHD}, there are many interesting results when the background magnetic filed is $\vec{e}_1$, i.e. in the case that the initial data $(u_0,b_0)$ is sufficiently close to the equilibrium state $(\vec{0},\vec{e}_1)$, we refer the reader to see \cite{Abi17,Hu14,Lin15,LinZ15,Ren14,Xu15,Zha14}. In \cite{Zha16}, Zhang obtained the global well-posedness result with the large initial data under the assumption that the background magnetic field is large enough.  Subsequently, Zhai and Zhang \cite{Zhai16} considered the global existence and uniqueness of solution to system \eqref{MHD} with non-equilibrium background magnetic field.

However, the continuous dependence property of solutions for the Cauchy problem of the non-resistive MHD equations with initial data $(u_0,b_0)\in H^s(\mathbb{R}^d)\times H^s(\mathbb{R}^d),\ s>\frac d2$ has not been studied yet. Our first goal in the paper is to prove the continuous dependence property of solutions to the non-resistive MHD equations in such Sobolev spaces. Together with recent local existence and uniqueness results in \cite{F,F1}, we establish local well-posedness of the Cauchy problem of the non-resistive MHD system in the sense of Hadamard.

In order to state our main result, we first recall the following local-in-time existence of strong solutions to \eqref{MHD} in \cite{F}:
\begin{lemm}\cite{F}\label{le0}
For $s>\frac d2,\ d=2,3$, and initial data $u_0,b_0\in H^s(\R^d)$ with $\D u_0=\D b_0=0$, there exists a time $T=T(s,||u_0||_{H^s},||b_0||_{H^s})>0$ such that the system \eqref{MHD} have a unique solution $(u,b)$, with $u,b\in \mathcal{C}([0,T];H^s)$ and $\na u\in L^2_T(H^s)$. Moreover, for all $t\in[0,T]$, there holds
\[||u(t)||^2_{H^s}+||b(t)||^2_{H^s}\leq \frac{||u_0||^2_{H^s}+||b_0||^2_{H^s}}{1-C_sT(||u_0||^2_{H^s}+||b_0||^2_{H^s})},\]
\[\int^t_0||\nabla u(\tau)||^2_{H^s}\dd \tau\leq C_s\frac{||u^2_0||_{H^s}+||b_0||^2_{H^s}}{1-C_sT(||u_0||^2_{H^s}+||b_0||^2_{H^s})}.\]
\end{lemm}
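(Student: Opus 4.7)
The plan is to combine a Friedrichs--type approximation scheme with an $H^s$ energy estimate engineered around the two structural features of \eqref{MHD}: the divergence-free cross-cancellation between the Lorentz and stretching terms, and the parabolic smoothing of $u$ that can absorb $L^\infty$ norms of $\na u$.

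\emph{Step 1 (Approximation).} I would first introduce the frequency cutoff $J_n$ onto $\{|\xi|\leq n\}$ and solve the truncated system obtained by replacing each nonlinearity $v\cd\na w$ with $J_n(J_nv\cd\na J_nw)$ and $\De u$ with $J_n\De J_nu$, with data $(J_nu_0,J_nb_0)$. This is an ODE in $L^2_n:=J_nL^2$ and admits a unique global smooth solution $(u^n,b^n)$ by Cauchy--Lipschitz.

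\emph{Step 2 ($H^s$ estimate).} I would apply $\Lambda^s:=(-\De)^{s/2}$ to both equations and pair with $\Lambda^su^n$ and $\Lambda^sb^n$ respectively. Using $\D u^n=\D b^n=0$, the pressure drops and the pure transport contributions disappear after integration by parts. The decisive algebraic cancellation is
\[
  (b\cd\na\Lambda^sb,\Lambda^su)+(b\cd\na\Lambda^su,\Lambda^sb)=\int b_k\,\pa_k(\Lambda^sb_j\,\Lambda^su_j)\,\dd x=0,
\]
which again uses $\D b=0$ and kills the top-order contribution of the magnetic nonlinearities. What remains are commutator residuals $[\Lambda^s,u]\cd\na u$, $[\Lambda^s,u]\cd\na b$, $[\Lambda^s,b]\cd\na u$, $[\Lambda^s,b]\cd\na b$, which I would control by Kato--Ponce--type inequalities. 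Since $s>d/2$ yields $\|\na u\|_{L^\infty}\lesssim\|\na u\|_{H^s}$, Young's inequality absorbs every $\|\na u\|_{L^\infty}$ factor into the dissipative term $\|\na u^n\|_{H^s}^2$ on the left; a paraproduct splitting of the $\|\na b\|_{L^\infty}$--type pieces lets one transfer the extra derivative from $b$ to $u$ and absorb it the same way. The outcome is
\[
  \fr{d}{dt}\bigl(\|u^n\|_{H^s}^2+\|b^n\|_{H^s}^2\bigr)+\|\na u^n\|_{H^s}^2\leq C_s\bigl(\|u^n\|_{H^s}^2+\|b^n\|_{H^s}^2\bigr)^2,
\]
from which the lifespan $T\sim\bigl(C_s(\|u_0\|_{H^s}^2+\|b_0\|_{H^s}^2)\bigr)^{-1}$ and the two stated bounds follow by direct integration in time.

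\emph{Step 3 (Limit and uniqueness).} The uniform bounds together with an Aubin--Lions argument (using the equations to bound $\pa_tu^n,\pa_tb^n$ in a negative Sobolev space) yield strong convergence along a subsequence to a solution $(u,b)\in\mathcal{C}([0,T];H^s)$ with $\na u\in L^2_T(H^s)$. Uniqueness is then obtained at the $L^2$ level on the difference $(\delta u,\delta b)$, where the only delicate term $(\delta b\cd\na u,\delta b)_{L^2}$ is again absorbed by the dissipation on $u$.

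The main obstacle is Step 2: since $b$ carries no dissipation and $s>d/2$ does not give $\|\na b\|_{L^\infty}\lesssim\|b\|_{H^s}$, every putative $\|\na b\|_{L^\infty}$ factor must be traded, through the commutator identities and the cross-cancellation above, for a factor involving $\na u$ that the parabolic smoothing can swallow. Verifying that all such bad terms are indeed of this form, so that only $\|u\|_{H^s}^2+\|b\|_{H^s}^2$ appears on the right, is the heart of the argument.
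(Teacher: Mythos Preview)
The paper does not prove this lemma: it is quoted verbatim from \cite{F} and used as a black box, so there is no ``paper's own proof'' to compare against. Your outline is in fact a faithful high-level summary of the argument in \cite{F}: Friedrichs regularisation, the $H^s$ energy identity exploiting the cross-cancellation
\[
(b\cd\na\Lambda^s b,\Lambda^s u)+(b\cd\na\Lambda^s u,\Lambda^s b)=0,
\]
and then commutator control of the residuals with the $\|\na u\|_{H^s}^2$ dissipation absorbing the dangerous factors.

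One remark on Step~2. You are right that the crux is the commutator $[\Lambda^s,b]\na b$ (paired with $\Lambda^s u$), since standard Kato--Ponce produces a factor $\|\na b\|_{L^\infty}$ that $\|b\|_{H^s}$ with $s>\tfrac d2$ does not control. Your phrase ``a paraproduct splitting \ldots lets one transfer the extra derivative from $b$ to $u$'' is the correct heuristic, but in \cite{F} this is made precise through a \emph{new} commutator inequality (their ``higher order commutator estimate'') of the schematic form
\[
\bigl|\bigl(\Lambda^s(b\cd\na b)-b\cd\na\Lambda^s b,\ \Lambda^s u\bigr)\bigr|\ \lesssim\ \|b\|_{H^s}^2\,\|\na u\|_{H^s},
\]
which genuinely requires the pairing with $\Lambda^s u$ (so that one derivative can be shifted onto $u$ by duality) and is not a consequence of the classical Kato--Ponce bound. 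The analogous issue for $[\Lambda^s,u]\na b$ is handled the same way. So your proposal is correct in spirit and identifies the obstacle accurately; the missing technical ingredient is precisely the bilinear commutator estimate that is the main contribution of \cite{F}.
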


To solve the continuous dependence property of the system \eqref{MHD}, the main difficulty is that the system is only partially parabolic, owing to the magnetic equation which is of hyperbolic type. That is if $(u^1,b^1)$ and $(u^2,b^2)$ are two solutions of the system \eqref{MHD}, then we set $\delta b=b^1-b^2$ satisfies
\[\partial_t\delta b+u^1\cdot\nabla \delta b=\delta b\cd\na u^1+b^2\cd \na \delta u-\delta u\cdot\nabla b^2.\]
Due to the term $\na u^2\in \mathcal{C}([0,T];H^{s-1})$, this precludes any attempt to tackle with the estimate $||\delta b||_{H^s}$. Inspired by Bona-Smith \cite{BS} and Guo-Li-Yin \cite{G}, we can take $(u^2,b^2)$ as the solution with initial data $S_j(u_0,b_0)$ to overcome this problem. Therefore,  we first will prove the continuous dependence property of solutions to the system \eqref{MHD}.

Our first main theorem can be stated as follows:
\begin{theo}\label{th1}
Let $d=2,3$ and $s>\frac d2$. Denote $\bar{\mathbb{N}}=\mathbb{N}\cup\{\infty\}$. Assume that $(u^n,b^n)_{n\in \bar{\mathbb{N}}}$ be the solution to the system \eqref{MHD} with initial data $(u^n_0,b^n_0)_{n\in \bar{\mathbb{N}}}$. If $(u^n_0,b^n_0)$ tends to $(u^\infty_0,b^\infty_0)$ in $H^s$, then there exists a positive time $T$ independent of $n$ such that $(u^n,b^n)$ tends to $(u^\infty,b^\infty)$ in $\mathcal{C}([0,T];H^s)\times \mathcal{C}([0,T];H^s)$.
\end{theo}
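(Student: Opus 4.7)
The plan is to prove continuous dependence via a Bona--Smith regularization, following the hint the authors give in the paragraph preceding the theorem. First, by Lemma \ref{le0} applied to the convergent (hence uniformly $H^s$-bounded) sequence $(u_0^n,b_0^n)$, one obtains a common existence time $T>0$ on which every $(u^n,b^n)$, $n\in\bar{\mathbb{N}}$, is defined and uniformly bounded in $\mathcal{C}([0,T];H^s)$, with $\na u^n$ uniformly bounded in $L^2_T(H^s)$. Let $S_j$ denote the standard low-frequency cutoff at frequency $2^j$, and let $(u^{n,j},b^{n,j})$ be the solution of \eqref{MHD} with initial data $(S_j u_0^n,S_j b_0^n)$. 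Because $\|S_j v\|_{H^s}\le\|v\|_{H^s}$, Lemma \ref{le0} gives existence on the same $[0,T]$; propagation of regularity along the $u$-flow (standard, since $\|\na u\|_{L^1_T L^\infty}$ is controlled by parabolic smoothing) then yields $(u^{n,j},b^{n,j})\in\mathcal{C}([0,T];H^{s+\sigma})$ for every $\sigma\ge 0$, with $\|(u^{n,j},b^{n,j})\|_{L^\infty_T H^{s+\sigma}}\le C\,2^{j\sigma}$ uniformly in $n$.

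I would then split
\[
(u^n,b^n)-(u^\infty,b^\infty)=\bigl[(u^n,b^n)-(u^{n,j},b^{n,j})\bigr]+\bigl[(u^{n,j},b^{n,j})-(u^{\infty,j},b^{\infty,j})\bigr]+\bigl[(u^{\infty,j},b^{\infty,j})-(u^\infty,b^\infty)\bigr].
\]
For each fixed $j$, the middle piece tends to $0$ in $\mathcal{C}([0,T];H^s)$ as $n\to\infty$: its initial data converge in every $H^\sigma$, and at the higher regularity level the ``dangerous'' term $\delta u\cd\na b$ now involves $b\in H^{s+1}$, so $\na b\in H^s$ and a routine $H^s$ Gronwall estimate closes without derivative loss. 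Everything therefore reduces to the uniform smoothing estimate
\[
\lim_{j\to\infty}\sup_{n\in\bar{\mathbb{N}}}\,\|(u^n,b^n)-(u^{n,j},b^{n,j})\|_{L^\infty_T H^s}=0. \qquad(\star)
\]

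To prove $(\star)$, I would perform a two-tier energy estimate on $(\tilde u,\tilde b):=(u^n-u^{n,j},b^n-b^{n,j})$. At the $H^{s-1}$ level, standard product estimates using the uniform $H^s$ bound on both solutions, combined with the parabolic smoothing of $\tilde u$, give
\[
\|(\tilde u,\tilde b)\|_{L^\infty_T H^{s-1}}^2+\|\na\tilde u\|_{L^2_T H^{s-1}}^2\le C\|(\tilde u_0,\tilde b_0)\|_{H^{s-1}}^2\le C\,2^{-2j}\varepsilon_j^2,
\]
where $\varepsilon_j:=\sup_n 2^j\|(u_0^n-S_j u_0^n,\,b_0^n-S_j b_0^n)\|_{H^{s-1}}\to 0$ as $j\to\infty$; this uses that the data lie in $H^s$, strictly better than $H^{s-1}$, together with relative compactness of the convergent sequence in $H^s$. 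At the $H^s$ level the only truly dangerous term is $\tilde u\cd\na b^n$ in the $\tilde b$-equation, which I would split via $\na b^n=\na b^{n,j}+\na\tilde b$: the first summand is controlled by pairing $\|b^{n,j}\|_{H^{s+1}}\le C\,2^j$ with $\|\tilde u\|_{H^{s-1}}\le C\,2^{-j}\varepsilon_j$, producing a contribution of order $\varepsilon_j\|\tilde b\|_{H^s}$, while $\tilde u\cd\na\tilde b$ is absorbed via a commutator estimate exploiting $\D\,\tilde u=0$. Gronwall then delivers
\[
\|(\tilde u,\tilde b)\|_{L^\infty_T H^s}^2\le C\bigl(\|(\tilde u_0,\tilde b_0)\|_{H^s}^2+\varepsilon_j^2\bigr),
\]
and both terms on the right tend to $0$ as $j\to\infty$ uniformly in $n$, yielding $(\star)$. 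The heart of the argument, and the main obstacle, is precisely this $H^s$ estimate: balancing the $H^{s+1}$ blow-up of the mollified solution (of order $2^j$) against the $H^{s-1}$ smallness of $(\tilde u,\tilde b)$ (of order $2^{-j}\varepsilon_j$) is exactly what the Bona--Smith splitting $\na b^n=\na b^{n,j}+\na\tilde b$ accomplishes; the commutator and product estimates at regularity $s$ just above $d/2$ must also be handled with some care.
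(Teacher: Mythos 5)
Your proposal is correct and follows essentially the same route as the paper: a Bona--Smith regularization with auxiliary solutions emanating from $S_j(u_0^n,b_0^n)$, a two-tier $H^{s-1}$/$H^{s}$ energy estimate (the paper's Lemma \ref{le-estimate}) in which the loss-of-derivative term $\delta u\cdot\nabla b$ is paid for by the $H^{s+1}$ bound of order $2^{j}$ on the mollified solution against the $O(2^{-j})$ smallness of the difference in $H^{s-1}$ (including $\nabla\delta u$ in $L^2_T H^{s-1}$), and the three-term triangle inequality with the tails $\|(\mathrm{Id}-S_j)(u_0^\infty,b_0^\infty)\|_{H^s}$ handled by compactness. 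The only cosmetic differences are that the paper bounds the middle term by interpolating between $H^{s-1}$ and $H^{s+1}$ rather than by a direct $H^{s}$ estimate at the smooth level, and arranges the difference system so that the dangerous term is $\delta u\cdot\nabla b^{n,j}$ with the mollified magnetic field from the start, which avoids your extra splitting $\nabla b^{n}=\nabla b^{n,j}+\nabla\tilde b$ and the accompanying commutator step.
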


When $b=0$, System \eqref{MHD} becomes Navier-Stokes euqations. It is well known that the solution map of Navier-Stokes euqation is local lipshitz. That is, if $||u||_{H^s},||v||_{H^s}\leq R$, we can obtain that
\bbal
||u(t)-v(t)||_{H^s}\leq C||u_0-v_0||_{H^s}, \qquad t\in [0,T],
\end{align*}
for some positive time $T$ depending only on $s$ and $R$.

However, the magnetic equation of the non-resistive MHD equations have the construction of transport diffusion equation. Therefore, motivated by Himonas-Misio{\l}ek \cite{HM}, we can conclude that Cauchy problem (1.3) is not uniformly continuous. The main difficulty is to bound $u^{\omega,n}(t,x)-u^{\omega,n}(0,x)$ less then $n^{-1}$ in Sobolev $H^{s-1}$ norm which is the main part of $\pa_tb^{h,\omega,n}+u^{\omega,n}\cd \na b^{h,\omega,n}$. To overcome it, we split $u^{\omega,n}(t,x)-u^{\omega,n}(0,x)$ into two parts: $u^{\omega,n}(t,x)-e^{t\Delta}u^{\omega,n}(0,x)$ and $e^{t\Delta}u^{\omega,n}(0,x)-u^{\omega,n}(0,x)$. We apply the Duhamel's principle to tackle with the first part. The second part can be seen as $\Delta u^{\omega,n}(0,x)$, and its estimate adds the factor $n^{-2\delta}$ compared with $u^{\omega,n}(0,x)$.

Our second main theorem can be stated as follows:

\begin{theo}\label{th2}
Let $d=2,3$ and $s>\frac d2$. The data-to-solution map for the system \eqref{MHD} is not uniformly continuous from any bounded subset in $H^s$ into $C([0,T];H^s)$. That is, there exists two sequences of solutions $(u^n,b^n)$ and $(v^n,c^n)$ such that
\bbal
&||u^n_0||_{H^s}+||b^n_0||_{H^s}+||v^n_0||_{H^s}+||c^n_0||_{H^s}\lesssim 1,
\\&\lim_{n\rightarrow \infty}\big(||u^n_0-v^n_0||,b^n_0-c^n_0||_{H^s}\big)= 0,
\\&\liminf_{n\rightarrow \infty}\big(||u^n(t)-v^n(t)||_{H^s}+||b^n(t)-c^n(t)||_{H^s}\gtrsim |\sin t|,  \quad t\in[0,T].
\end{align*}
\end{theo}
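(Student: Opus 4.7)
The strategy is a Himonas--Misio{\l}ek-type argument adapted to the partially parabolic structure of \eqref{MHD}. Fix a bump $\phi\in C_c^\infty(\R^d)$ with $\phi(0)=1$ and a smooth divergence-free background $w\in H^s$. Set
\[
u^n_0=w+n^{-1}\phi(x)\vec{e}_1,\qquad v^n_0=w,\qquad b^n_0=c^n_0=\na^\perp\!\big(n^{-s-1}\sin(nx_1)\phi(x)\big)
\]
in $d=2$ (and the analogous curl of a vector potential in $d=3$); the leading oscillatory part of $b^n_0$ is $-n^{-s}\cos(nx_1)\phi(x)\vec{e}_2$. All four data are divergence-free, uniformly bounded in $H^s$, and $\|u^n_0-v^n_0\|_{H^s}=n^{-1}\|\phi\|_{H^s}\to 0$, $b^n_0-c^n_0\equiv 0$. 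Lemma~\ref{le0} supplies solutions $(u^n,b^n)$ and $(v^n,c^n)$ on a common interval $[0,T]$ with uniform $H^s$ bounds, and a standard parabolic stability estimate gives $\|u^n(t)-v^n(t)\|_{H^s}=o(1)$ uniformly in $t\in[0,T]$.

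\textbf{Approximate transport.} Freezing the drift in $\pa_t b+u\cdot\na b=b\cdot\na u$ at $u^n_0$ yields an autonomous transport-plus-source equation whose solution is $b^n_0\circ \sigma^{u^n_0}_{-t}$ (up to stretching). A Gr\"onwall argument then shows $b^n(t,\cdot)=b^n_0\circ \sigma^{u^n_0}_{-t}+o_{H^s}(1)$, and similarly for $c^n$. The only delicate remainder comes from $(u^n(t)-u^n_0)\cdot\na b^n$, which couples the high-frequency derivative $\na b^n$ (of size $n\cdot n^{-s}$) to the drift deviation. Following the hint in the introduction, split
\[
u^n(t,x)-u^n_0(x)=\big[u^n(t,x)-e^{t\De}u^n_0(x)\big]+\big[e^{t\De}u^n_0(x)-u^n_0(x)\big];
\]
the first bracket is bounded in $H^{s-1}$ by Duhamel for the Stokes system in terms of $t\cdot\|u^n\cdot\na u^n-b^n\cdot\na b^n\|_{L^\infty_T H^{s-1}}=O(t)$, while the second, via $e^{t\De}-I=\int_0^t e^{\tau\De}\De\,\dd\tau$, gains the advertised factor $n^{-2\delta}$ after absorbing $2\delta$ derivatives on the Laplacian acting on the essentially low-frequency $u^n_0$. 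Together one obtains $\|u^n(t)-u^n_0\|_{H^{s-1}}\lesssim n^{-1}$ uniformly in $t\in[0,T]$, closing the approximate-transport identity.

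\textbf{Phase separation and lower bound.} Since $b^n_0=c^n_0$ and $\sigma^{u^n_0}_{-t}(x)-\sigma^{v^n_0}_{-t}(x)=-t(u^n_0-v^n_0)(x)+O(t^2/n)=-tn^{-1}\phi(x)\vec{e}_1+O(t^2/n)$, inserting into the leading oscillatory part of $b^n_0$ and applying $\cos A-\cos B=-2\sin(\tfrac{A+B}{2})\sin(\tfrac{A-B}{2})$ with $A-B=-t\phi(x)+O(t/n)$ gives
\[
b^n(t)-c^n(t)=-2n^{-s}\phi(x)\sin\!\Big(nx_1-\tfrac{nt}{2}(u_0^{n,1}+v_0^{n,1})\Big)\sin\!\Big(\tfrac{t\phi(x)}{2}\Big)\vec{e}_2+o_{H^s}(1).
\]
Applying $\pa^{s}_{x_1}$ the oscillation contributes a factor $n^s$ which cancels the $n^{-s}$ amplitude, leaving a function whose $L^2$ norm is bounded below by $c|\sin t|$ for $t$ in a neighborhood of zero (using $\phi(0)=1$). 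This delivers $\liminf_n\|b^n(t)-c^n(t)\|_{H^s}\gtrsim|\sin t|$. The main obstacle throughout is the quantitative closing of the approximate-transport identity, since naive $H^s$ commutator estimates for transport produce $\|\na u^n\|_{H^s}$ or $\|b^n\cdot\na u^n\|_{H^s}$, which are not small in $n$; the heat-semigroup decomposition of $u^n(t)-u^n_0$ is precisely what rescues the argument, and it is also what forces the small-time restriction in the conclusion.
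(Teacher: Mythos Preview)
Your construction has two concrete gaps that prevent the argument from closing.

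First, the initial velocity $u_0^n = w + n^{-1}\phi(x)\vec{e}_1$ is not divergence-free (since $\partial_1\phi \not\equiv 0$), so it is not admissible data for \eqref{MHD}. The paper avoids this by taking the low-frequency velocity to be a curl, $b_0^{l,\omega,n} = \mathrm{rot}\,\phi^{l,\omega,n}$, whose second component is arranged to equal exactly $\omega n^{-1}$ on the support of the high-frequency bump.

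Second, and more seriously, the quantitative estimates do not close. You invoke ``the advertised factor $n^{-2\delta}$'' for $e^{t\Delta}u_0^n - u_0^n$, but your construction contains no $\delta$ parameter: your bump $\phi$ lives at unit scale, so $\|\Delta u_0^n\|_{H^s} \simeq n^{-1}$, not $n^{-1-\delta}$. Consequently $\|u^n(t)-u_0^n\|_{H^s}$ is only $O(n^{-1})$, the remainder $(u^n(t)-u_0^n)\cdot\nabla b^n$ is only $O(n^{-1})$ in $H^{s-1}$, and after interpolating the $H^{s-1}$ error against the $H^{s+1}$ bound (which is $O(n)$), the approximate-transport error in $H^s$ is $O(1)$ --- not $o(1)$. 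The paper's construction dilates the low-frequency data to scale $n^\delta$ (with $0<\delta<\tfrac13$), which is precisely what makes each spatial derivative on the low-frequency velocity gain a factor $n^{-\delta}$; this yields $\|u^{\omega,n}(t)-u^{\omega,n}(0)\|_{H^s} \lesssim n^{-2+2\delta}+n^{-1-\delta}=o(n^{-1})$ and, after the $H^{s-1}$/$H^{s+1}$ interpolation, an $o(1)$ error in $H^s$.

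The paper also organizes the comparison differently: rather than freezing the drift at $u_0^n$ and using the flow $\sigma^{u_0^n}_{-t}$, it builds an explicit approximate solution $(u^{\omega,n},\, b^{l,\omega,n}+b^{h,\omega,n})$ in which $(u^{\omega,n}, b^{l,\omega,n})$ solves the full MHD with \emph{small} low-frequency data only, and $b^{h,\omega,n}$ carries a prescribed phase $nx_2-\omega t$. This makes the cancellation in $\partial_t b^{h,\omega,n}+u^{\omega,n}\cdot\nabla b^{h,\omega,n}$ explicit (the leading terms cancel because $u_2^{\omega,n}(0,\cdot)\equiv \omega n^{-1}$ on the relevant support) and ensures that the velocity equation only sees $b^{l,\omega,n}\cdot\nabla b^{l,\omega,n}$, which is genuinely small --- whereas in your setup $u^n$ is the actual solution and its Duhamel formula contains the full $b^n\cdot\nabla b^n$.
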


The paper is organized as follows. In Section 2, we recall the Littlewood-Paley theory and give some properties of Besov spaces. In Section 3,
we introduce some lemmas to overcome the difficulty of the problem. In Section 4, we prove the  continuous dependence property of the system \eqref{MHD}. In Section 5, we show that the solution of the system \eqref{MHD} is non-uniform  continuous dependence.

\vspace*{1em}

\noindent\textbf{Notations.} Given a Banach space $X$, we denote its norm by $\|\cdot\|_{X}$. Since all spaces of functions are over $\mathbb{R}^d$, for simplicity, we drop  $\mathbb{R}^d$ in our notations of function spaces if there is no ambiguity. The symbol $A\lesssim B$ denotes that there exists a constant $c>0$ independent of $A$ and $B$, such that $A\leq c B$. The symbol $A\simeq B$ represents $A\lesssim B$ and $B\lesssim A$.

\section{Littlewood-Paley analysis}

In this section, we will recall some facts about the Littlewood-Paley decomposition, the nonhomogeneous Besov spaces and their some useful properties. For more details, the readers can refer to \cite{B.C.D}.

There exists a couple of smooth functions $(\chi,\varphi)$ valued in $[0,1]$, such that $\chi$ is supported in the ball $\mathcal{B}\triangleq \{\xi\in\mathbb{R}^d:|\xi|\leq \frac 4 3\}$, and $\varphi$ is supported in the ring $\mathcal{C}\triangleq \{\xi\in\mathbb{R}^d:\frac 3 4\leq|\xi|\leq \frac 8 3\}$. Moreover,
$$\forall\,\ \xi\in\mathbb{R}^d,\,\ \chi(\xi)+{\sum\limits_{j\geq0}\varphi(2^{-j}\xi)}=1,$$
$$\forall\,\ 0\neq\xi\in\mathbb{R}^d,\,\ {\sum\limits_{j\in \Z}\varphi(2^{-j}\xi)}=1,$$
$$|j-j'|\geq 2\Rightarrow\textrm{Supp}\,\ \varphi(2^{-j}\cdot)\cap \textrm{Supp}\,\ \varphi(2^{-j'}\cdot)=\emptyset,$$
$$j\geq 1\Rightarrow\textrm{Supp}\,\ \chi(\cdot)\cap \textrm{Supp}\,\ \varphi(2^{-j}\cdot)=\emptyset,$$
Then, we can define the nonhomogeneous dyadic blocks $\Delta_j$ and nonhomogeneous low frequency cut-off operator $S_j$ as follows:
$$\Delta_j{u}= 0,\,\ if\,\ j\leq -2,\quad
\Delta_{-1}{u}= \chi(D)u=\mathcal{F}^{-1}(\chi \mathcal{F}u),$$
$$\Delta_j{u}= \varphi(2^{-j}D)u=\mathcal{F}^{-1}(\varphi(2^{-j}\cdot)\mathcal{F}u),\,\ if \,\ j\geq 0,$$
$$S_j{u}= {\sum\limits_{j'=-\infty}^{j-1}}\Delta_{j'}{u}.$$

\begin{defi}(\cite{B.C.D})\label{de2.3}
Let $s\in\mathbb{R}$ and $1\leq p,r\leq\infty$. The nonhomogeneous Besov space $B^s_{p,r}$ consists of all tempered distribution $u$ such that
\begin{align*}
||u||_{B^s_{p,r}}\triangleq \Big|\Big|(2^{js}||\Delta_j{u}||_{L^p})_{j\in \Z}\Big|\Big|_{\ell^r(\Z)}<\infty.
\end{align*}
\end{defi}
\begin{rema}(\cite{B.C.D})
When $p=r=2$, we have $B^s_{2,2}(\R^d)=H^s(\R^d)$. Here, $H^s(\R^d)$ is the standard Sobolev space with the norm
\bbal
||u||^2_{H^s}:=\int_{\R^d}(1+|\xi^2|)^s|\hat{u}(\xi)|^2\dd \xi.
\end{align*}
 If $s>\frac dp$, we also have $||u||_{L^\infty}\lesssim ||u||_{B^s_{p,r}}$.
\end{rema}

Then, we have the following product laws.
\begin{lemm}(\cite{B.C.D})\label{le1}
Let $s>\frac d2,\ d\geq 2$. Then there exists a constant $C=C(d,s)$ such that
\[||uv||_{H^{s-1}}\leq C\big(||u||_{L^\infty}||v||_{H^{s-1}}+||v||_{L^\infty}||u||_{H^{s-1}}\big),\]
\[||uv||_{H^s}\leq C||u||_{H^s}||v||_{H^s}, \quad ||uv||_{H^{s-1}}\leq C||u||_{H^{s-1}}||v||_{H^s}.\]
\end{lemm}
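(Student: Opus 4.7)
The plan is to exploit Bony's paraproduct decomposition
\[uv \;=\; T_u v + T_v u + R(u,v),\qquad T_f g \;=\; \sum_{j} S_{j-1}f\,\Delta_j g,\qquad R(u,v)\;=\;\sum_{|j-j'|\le 1}\Delta_j u\,\Delta_{j'}v,\]
together with the standard continuity estimates for paraproducts and remainders recorded in \cite{B.C.D}. Throughout I use that $d=2,3$ and $s>\tfrac d2$, so $s-1>0$ and the Sobolev embedding $H^s\hookrightarrow L^\infty$ is available.

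For the first inequality, the bound $\|T_f g\|_{H^{s-1}}\lesssim \|f\|_{L^\infty}\|g\|_{H^{s-1}}$ applied to $T_u v$ and $T_v u$ yields the two desired terms on the right-hand side directly. The remainder $R(u,v)$ is controlled by placing one factor in $L^\infty$ via H\"older and summing over the diagonal blocks $j\sim j'$; the positivity $s-1>0$ makes the resulting geometric series summable and produces $\|R(u,v)\|_{H^{s-1}}\lesssim \|u\|_{L^\infty}\|v\|_{H^{s-1}}+\|v\|_{L^\infty}\|u\|_{H^{s-1}}$. The second inequality is then an immediate corollary: run the same argument with $s$ in place of $s-1$ (both exponents are positive), and absorb the $L^\infty$ norms via $\|f\|_{L^\infty}\lesssim \|f\|_{H^s}$.

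The third inequality is the delicate one and I expect it to be the main obstacle: when $s-1\le d/2$ we do not have $H^{s-1}\hookrightarrow L^\infty$, so the first inequality cannot be invoked directly. I would treat the three pieces of Bony's decomposition separately. The term $T_v u$ is immediate: $\|T_v u\|_{H^{s-1}}\lesssim \|v\|_{L^\infty}\|u\|_{H^{s-1}}\lesssim \|v\|_{H^s}\|u\|_{H^{s-1}}$. For $T_u v$, I would use the H\"older pairing $\|S_{j-1}u\,\Delta_j v\|_{L^2}\le \|S_{j-1}u\|_{L^\infty}\|\Delta_j v\|_{L^2}$ together with the Bernstein bound $\|S_{j-1}u\|_{L^\infty}\lesssim 2^{j(d/2-s+1+\varepsilon)}\|u\|_{H^{s-1}}$ valid for any $\varepsilon>0$; combining with $\|\Delta_j v\|_{L^2}\lesssim 2^{-js}c_j\|v\|_{H^s}$ where $(c_j)\in\ell^2$, and choosing $\varepsilon<s-\tfrac d2$, one finds that $2^{k(s-1)}\|\Delta_k T_u v\|_{L^2}\lesssim 2^{k(d/2-s+\varepsilon)}\tilde c_k\|u\|_{H^{s-1}}\|v\|_{H^s}$, an $\ell^2$ sequence in $k$ since $d/2-s+\varepsilon<0$. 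The remainder $R(u,v)$ is handled analogously, placing $\Delta_j u$ in $L^\infty$ via Bernstein and using $s>\tfrac d2$ to close the high-frequency sum. Summing the three contributions completes the proof; the $T_u v$ step is precisely where the hypothesis $s>\tfrac d2$ is genuinely exploited, rather than merely $s-1>0$.
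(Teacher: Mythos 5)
Your paraproduct argument is correct; the paper itself gives no proof of this lemma, deferring to the cited reference \cite{B.C.D}, and the proof recorded there is exactly the Bony decomposition $uv=T_uv+T_vu+R(u,v)$ together with the paraproduct and remainder continuity estimates that you carry out, so your route is essentially the same. The one point to flag is that your Bernstein bound $\|S_{j-1}u\|_{L^\infty}\lesssim 2^{j(d/2-s+1+\varepsilon)}\|u\|_{H^{s-1}}$ is only valid when $s-1\le d/2$ (otherwise the right-hand side decays in $j$ while the left-hand side does not), but since you explicitly reserve that computation for this case and fall back on the first inequality when $s-1>d/2$, the argument is complete.
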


\begin{lemm}(\cite{B.C.D,L.Y})\label{le2}
Let $\sigma\in\mathbb{R}$ and $1\leq p,r\leq\infty$. Let $v$ be a vector field over $\mathbb{R}^d$. Assume that $\sigma>-d\min\{1-\frac{1}{p},\frac{1}{p}\}$. Define $R_j=[v\cdot\nabla,\Delta_j]f$. There a constant $C=C(p,\sigma,d)$ such that \\
\begin{equation*}
\big|\big|(2^{j\sigma}||R_j||_{L^p})_{j\geq-1}\big|\big|_{\ell^r}\leq
\begin{cases}
 C||\nabla v||_{B^{\frac dp}_{p,\infty}\cap L^\infty}||f||_{B^\sigma_{p,r}},\ \mathrm{if} \quad \sigma<1+\frac{d}{p},\\
 C||\nabla v||_{B^{\frac dp+1}_{p,\infty}}||f||_{B^{\sigma}_{p,r}},\quad \quad \mathrm{if} \quad \sigma=1+\frac{d}{p}\ ,r>1,\\
 C||\nabla v||_{B^{\sigma-1}_{p,r}}||f||_{B^\sigma_{p,r}}, \quad \quad \quad \quad  \mathrm{otherwise}.
\end{cases}\end{equation*}
\end{lemm}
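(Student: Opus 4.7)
The plan is to apply Bony's paraproduct decomposition to $v^k\partial_k f$, subtract the same decomposition of $v^k\partial_k\Delta_j f$, and estimate each piece in the weighted $\ell^r$-norm defining $\|\cdot\|_{B^\sigma_{p,r}}$. Writing
\[
v^k\partial_k f = T_{v^k}\partial_k f + T_{\partial_k f}v^k + R(v^k,\partial_k f)
\]
and using $[\partial_k,\Delta_j]=0$, the commutator splits into three groups:
\[
R_j = [T_{v^k},\Delta_j]\partial_k f + \bigl(T_{\partial_k\Delta_j f}v^k-\Delta_j T_{\partial_k f}v^k\bigr) + \bigl(R(v^k,\partial_k\Delta_j f) - \Delta_j R(v^k,\partial_k f)\bigr).
\]

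The first, genuinely "commutator" piece is the heart of the estimate. By spectral localization of the paraproduct,
\[
[T_{v^k},\Delta_j]\partial_k f = \sum_{|j'-j|\leq 4}[S_{j'-1}v^k,\Delta_j]\partial_k\Delta_{j'}f,
\]
and writing $\Delta_j g = 2^{jd}h(2^j\cdot)\ast g$ with $h=\mathcal{F}^{-1}\varphi$, a first-order Taylor expansion of $S_{j'-1}v^k$ inside the convolution integral produces the gain
\[
\|[S_{j'-1}v^k,\Delta_j]g\|_{L^p}\lesssim 2^{-j}\|\nabla v\|_{L^\infty}\|g\|_{L^p},
\]
which, applied with $g=\partial_k\Delta_{j'}f$ (and using $2^{j'}\sim 2^j$), yields $\|\nabla v\|_{L^\infty}\sum_{|j'-j|\leq 4}\|\Delta_{j'}f\|_{L^p}$. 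Weighting by $2^{j\sigma}$ and taking $\ell^r$ in $j$ contributes at most $\|\nabla v\|_{L^\infty}\|f\|_{B^\sigma_{p,r}}$.

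For the paraproduct group I would invoke the standard continuity bound $\|T_g h\|_{B^\tau_{p,r}}\lesssim\|g\|_{L^\infty}\|h\|_{B^\tau_{p,r}}$ with $\tau=\sigma-1$; for the remainder group I would use the bilinear remainder estimate $\|R(g,h)\|_{B^\tau_{p,r}}\lesssim\|g\|_{B^{d/p}_{p,\infty}\cap L^\infty}\|h\|_{B^\tau_{p,r}}$, which converges precisely when $\tau>-d\min(1/p,1-1/p)$, i.e. under the hypothesis on $\sigma$. Absorbing the $\Delta_j$ and $\partial_k$ by a standard book-keeping argument (cf.~Bahouri--Chemin--Danchin~\cite{B.C.D}) then produces a bound of the form $\|\nabla v\|_{L^\infty\cap B^{d/p}_{p,\infty}}\|f\|_{B^\sigma_{p,r}}$ for these two groups as well.

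The trichotomy in the statement comes from the different ways of controlling $\nabla v$. In the subcritical range $\sigma<1+d/p$ one uses $\nabla v\in B^{d/p}_{p,\infty}\cap L^\infty$ directly, giving the first line. When $\sigma-1>d/p$, the embedding $B^{\sigma-1}_{p,r}\hookrightarrow L^\infty\cap B^{d/p}_{p,\infty}$ lets one absorb everything into $\|\nabla v\|_{B^{\sigma-1}_{p,r}}$, giving the third line. The critical endpoint $\sigma=1+d/p$ is the main technical obstacle: here $B^{d/p}_{p,\infty}$ narrowly fails to embed in $L^\infty$, so one must replace the natural norm by $\|\nabla v\|_{B^{d/p+1}_{p,\infty}}$ and exploit $r>1$ via Hölder in $j$ to sum an otherwise logarithmically divergent contribution. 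This endpoint case is where I expect to spend the most care; the subcritical and supercritical regimes follow essentially mechanically from the Bony calculus.
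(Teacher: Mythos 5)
The paper does not prove this lemma: it is quoted verbatim from the cited references (it is essentially Lemma~2.100 of Bahouri--Chemin--Danchin), so there is no internal proof to compare against. Your sketch reconstructs exactly the textbook route --- Bony decomposition of $v^k\partial_k f$, the Taylor-expansion commutator bound $\|[S_{j'-1}v^k,\Delta_j]g\|_{L^p}\lesssim 2^{-j}\|\nabla v\|_{L^\infty}\|g\|_{L^p}$ for the genuine commutator block, and paraproduct/remainder continuity for the rest, with the trichotomy coming from how $\nabla v$ is measured --- and that outline is correct, including your identification of the hypothesis $\sigma>-d\min\{1/p,1-1/p\}$ as the convergence condition for the remainder and of $\sigma=1+d/p$, $r>1$ as the delicate endpoint.

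Two points in the sketch are loose and would need repair in a full write-up. First, for the group $T_{\partial_k\Delta_jf}v^k-\Delta_jT_{\partial_kf}v^k$ the bound $\|T_gh\|_{B^{\sigma-1}_{p,r}}\lesssim\|g\|_{L^\infty}\|h\|_{B^{\sigma-1}_{p,r}}$ is the wrong tool: applied here it yields $\|\nabla f\|_{L^\infty}\|v\|_{B^{\sigma-1}_{p,r}}$, which is not of the required form. One must instead use the paraproduct estimate with a negative-regularity low-frequency factor, $\|T_ab\|_{B^{s+t}_{p,r}}\lesssim\|a\|_{B^{s}_{\infty,\infty}}\|b\|_{B^{t}_{p,r}}$ for $s<0$, with $a=\partial_kf\in B^{\sigma-1-d/p}_{\infty,\infty}$ (negative precisely when $\sigma<1+d/p$, which is where the first alternative of the trichotomy comes from) and convert $\Delta_{j'}v$ into $2^{-j'}\Delta_{j'}\nabla v$ by Bernstein. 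Second, only $\nabla v$ appears on the right-hand side of the lemma, but the block $\Delta_{-1}v$ is not controlled by $\nabla v$; in the remainder group this forces you to exploit the cancellation between $R(v^k,\partial_k\Delta_jf)$ and $\Delta_jR(v^k,\partial_kf)$ (equivalently, to subtract a suitable constant from $v$, since constants commute with $\Delta_j$) rather than estimating the two terms separately. Both are standard fixes and do not affect the viability of the approach.
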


\begin{lemm}(\cite{H-H,HM})\label{le3}
Let $\phi\in S(\mathbb{R})$, $\delta>0$ and $s\geq 0$. Then, we have for all $\alpha\in \mathbb{R}$
\bbal
&\lim\limits_{n\rightarrow \infty}n^{-\frac\delta2}||\phi(\frac{x}{n^{\delta}})||_{H^s(\R)}=||\phi||_{L^2(\R)},
\\&\lim\limits_{n\rightarrow \infty}n^{-\frac12\delta-s}||\phi(\frac{x}{n^{\delta}})\cos(nx-\alpha)||_{H^s(\R)}=\frac{1}{\sqrt{2}}||\phi||_{L^2(\R)},
\\&\lim\limits_{n\rightarrow \infty}n^{-\frac12\delta-s}||\phi(\frac{x}{n^{\delta}})\sin(nx-\alpha)||_{H^s(\R)}=\frac{1}{\sqrt{2}}||\phi||_{L^2(\R)}.
\end{align*}
\end{lemm}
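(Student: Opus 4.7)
The plan is to work on the Fourier side, using the identity
\bbal
||f||_{H^s(\R)}^2=\int_{\R}(1+|\xi|^2)^s|\hat f(\xi)|^2\dd\xi
\end{align*}
from the remark following Definition \ref{de2.3}. For each of the three limits I will change variables to ``unscale'' the dilation by $n^\delta$, extract the announced power of $n$, and then invoke dominated convergence, using the Schwartz property of $\hat\phi$ to produce an integrable dominating function.

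\textbf{First identity.} Writing $\phi_n(x):=\phi(x/n^\delta)$, so that $\widehat{\phi_n}(\xi)=n^\delta\hat\phi(n^\delta\xi)$, the substitution $\eta=n^\delta\xi$ gives
\bbal
n^{-\delta}||\phi_n||_{H^s(\R)}^2=\int_{\R}\left(1+\frac{|\eta|^2}{n^{2\delta}}\right)^{\!s}|\hat\phi(\eta)|^2\dd\eta.
\end{align*}
The integrand tends pointwise to $|\hat\phi(\eta)|^2$ and is dominated by $(1+|\eta|^2)^s|\hat\phi(\eta)|^2\in L^1(\R)$; dominated convergence and Plancherel then yield the first limit.

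\textbf{Second identity.} Using $\cos(nx-\alpha)=\frac12(e^{-i\alpha}e^{inx}+e^{i\alpha}e^{-inx})$ and the modulation identity $\mathcal F(e^{\pm inx}g)(\xi)=\hat g(\xi\mp n)$, the Fourier transform of $\phi(x/n^\delta)\cos(nx-\alpha)$ equals $\tfrac{e^{-i\alpha}}{2}F_+(\xi)+\tfrac{e^{i\alpha}}{2}F_-(\xi)$ with $F_\pm(\xi):=n^\delta\hat\phi(n^\delta(\xi\mp n))$. Expanding the squared $H^s$ norm produces two diagonal terms and one cross term. For the $F_+$ diagonal, the shift $\eta=n^\delta(\xi-n)$ transforms it into
\bbal
\tfrac14\int_{\R}(1+|\xi|^2)^s|F_+(\xi)|^2\dd\xi=\tfrac{n^{\delta+2s}}{4}\int_{\R}g_n(\eta)|\hat\phi(\eta)|^2\dd\eta,
\end{align*}
where $g_n(\eta):=n^{-2s}\bigl(1+(n+\eta/n^\delta)^2\bigr)^s\to 1$ pointwise and is controlled by $3^s(1+|\eta|^2)^s$ uniformly in $n\ge 1$. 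Hence this piece contributes $\tfrac14 n^{\delta+2s}||\phi||_{L^2(\R)}^2+o(n^{\delta+2s})$; the $F_-$ diagonal gives the same by symmetry. For the cross term, observe that $\max(|\xi-n|,|\xi+n|)\ge n$ at every $\xi\in\R$, so at least one of the factors $|\hat\phi(n^\delta(\xi\mp n))|$ is bounded by $C_N(1+n^{\delta+1})^{-N}$ by Schwartz decay. Combined with a change of variable on the remaining factor, this yields
\bbal
\int_{\R}(1+|\xi|^2)^s|F_+(\xi)F_-(\xi)|\dd\xi\lesssim_N n^{\delta+2s-N(\delta+1)},
\end{align*}
which is $o(n^{\delta+2s})$ for any $N\ge 1$. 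Dividing by $n^{\delta+2s}$ and taking square roots produces the constant $1/\sqrt{2}$.

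\textbf{Third identity and main obstacle.} The sine case reduces to the cosine case via $\sin(nx-\alpha)=\cos(nx-(\alpha+\pi/2))$; since the statement is uniform in $\alpha\in\R$, no separate computation is needed. The only non-routine step in the whole argument is the quantitative control of the cross term: one must convert the Schwartz decay of $\hat\phi$ into a bound that beats the factor $n^{\delta+2s}$ appearing in the diagonal. Once that is in place, everything else is an elementary change of variable and two applications of dominated convergence.
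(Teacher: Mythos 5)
Your argument is correct: the paper itself gives no proof of Lemma \ref{le3} (it is quoted from \cite{H-H,HM}), and your Fourier-side computation --- change of variables plus dominated convergence for the dilation, modulation splitting of $\cos$ into exponentials with the Schwartz decay of $\hat\phi$ killing the cross term via $\max(|\xi-n|,|\xi+n|)\ge n$ --- is exactly the standard argument used in those references. The dominating functions you exhibit ($(1+|\eta|^2)^s|\hat\phi(\eta)|^2$ and $3^s(1+|\eta|^2)^s|\hat\phi(\eta)|^2$) and the bound $n^{\delta+2s-N(\delta+1)}$ on the off-diagonal piece all check out, so there is nothing to fix.
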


\section{Continuous dependence}

In this section, we will show that the solution of is continuous dependence with initial data. First, we establish some Sobolev norm estimates for smooth solutions of system \eqref{MHD}, which is the key component in the proof of Theorem \ref{th1}. In order to simplify the notation, we define $(u,b)\in X_s(T)$ if
$$(u,b)\in \mathcal{C}([0,T];B^{\sigma}_{2,2}), \qquad \nabla u\in L^2([0,T];B^{\sigma}_{2,2}), \qquad \sigma\in\mathbb{R}.$$.

\begin{lemm}\label{le-estimate}
Let $d=2,3$ and $s>\frac d2$. Suppose that $(u^1,b^1)\in X_{s}(T)$ and $(u^2,b^2)\in X_{s+1}(T)$ are two solutions of \eqref{MHD} with initial data $(u^1_0,b^1_0)$ and $(u^2_0,b^2_0)$ respectively. Denote $\delta u=u^1-u^2$ and $\delta b=b^1-b^2$. Then, we have for all $t\in[0,T]$,
\bbal
&\quad ||\delta u(t)||^2_{B^{s-1}_{2,2}}+||\delta b(t)||^2_{B^{s-1}_{2,2}}+\int^t_0||\nabla\delta u(\tau)||^2_{B^{s-1}_{2,2}}\dd \tau
\\&\leq (||\delta u_0||^2_{B^{s-1}_{2,2}}+||\delta b_0||^2_{B^{s-1}_{2,2}})e^{\mathrm{A}(t)},
\end{align*}
and
\bbal
&\quad||\delta u(t)||^2_{B^{s}_{2,2}}+||\delta b(t)||^2_{B^{s}_{2,2}}+\int^t_0||\nabla\delta u(\tau)||^2_{B^{s}_{2,2}}\dd \tau
\\&\leq \Big(||\delta u_0||^2_{B^{s}_{2,2}}+||\delta b_0||^2_{B^{s}_{2,2}} +C\int^t_0||b^2||^2_{H^{s+1}}(||\delta u||^2_{B^{s-1}_{2,2}}+||\nabla\delta u||^2_{B^{s-1}_{2,2}}) \dd \tau\Big)e^{\mathrm{A}(t)},
\end{align*}
with
\bbal
\mathrm{A}(t)=C\int^t_0(1+||u^1||^2_{H^{s+1}}+||u^2||^2_{H^{s+1}}+||b^1||^2_{H^s}+||b^2||^2_{H^s})\dd \tau.
\end{align*}
\end{lemm}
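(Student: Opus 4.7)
The plan is to subtract the two copies of~\eqref{MHD} and run a Littlewood--Paley energy estimate on the difference $(\delta u,\delta b)$, which satisfies
\begin{align*}
\partial_t\delta u+u^1\cdot\nabla\delta u-\Delta\delta u+\nabla\delta P &= -\delta u\cdot\nabla u^2+b^1\cdot\nabla\delta b+\delta b\cdot\nabla b^2,\\
\partial_t\delta b+u^1\cdot\nabla\delta b &= -\delta u\cdot\nabla b^2+b^1\cdot\nabla\delta u+\delta b\cdot\nabla u^2,
\end{align*}
together with $\mathrm{div}\,\delta u=\mathrm{div}\,\delta b=0$. I would apply $\Delta_j$, pair with $\Delta_j\delta u$ and $\Delta_j\delta b$ in $L^2$, and sum with the weights $2^{2j(s-1)}$ and $2^{2js}$ to obtain the $H^{s-1}$ and $H^s$ bounds respectively. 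The pressure drops out by $\mathrm{div}\,\delta u=0$, and the transport commutators $[\Delta_j,u^1\cdot\nabla]$ are controlled via Lemma~\ref{le2}, contributing only coefficients bounded by $\|u^1\|^2_{H^{s+1}}$, which fit into $\mathrm{A}(t)$.

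In the velocity equation, whenever a top derivative would fall on $\delta b$ or $b^2$ (namely inside $b^1\cdot\nabla\delta b$ and $\delta b\cdot\nabla b^2$), I would use $\mathrm{div}\,b^1=\mathrm{div}\,\delta b=0$ to rewrite the product as a divergence, integrate by parts onto the test function, and absorb the surplus $\|\nabla\delta u\|_{B^s_{2,2}}$ into the parabolic dissipation by Young's inequality; this yields only the coefficients $\|b^1\|^2_{H^s}$ and $\|b^2\|^2_{H^s}$. The remaining piece $\delta u\cdot\nabla u^2$ is estimated directly through Lemma~\ref{le1} and supplies the $\|u^2\|^2_{H^{s+1}}$ coefficient.

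The genuine obstacle is the term $\delta u\cdot\nabla b^2$ in the magnetic equation, because the absence of diffusion on $\delta b$ forbids absorbing any $\|\nabla\delta b\|$ factor. For the $H^{s-1}$ estimate I would exploit $\mathrm{div}\,\delta u=0$ once more to rewrite $\delta u\cdot\nabla b^2=\mathrm{div}(\delta u\otimes b^2)$, so that
\[
\|\delta u\cdot\nabla b^2\|_{H^{s-1}}\lesssim\|\delta u\otimes b^2\|_{H^s}\lesssim\|\delta u\|_{H^s}\|b^2\|_{H^s}\lesssim\bigl(\|\delta u\|_{H^{s-1}}+\|\nabla\delta u\|_{H^{s-1}}\bigr)\|b^2\|_{H^s};
\]
the $\|\nabla\delta u\|_{H^{s-1}}$ piece is then Young-absorbed into the $H^{s-1}$ dissipation, leaving only $\|b^2\|_{H^s}^2$ inside $\mathrm{A}(t)$. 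At the $H^s$ level the divergence trick costs one derivative too many, so instead I would use the direct product bound $\|\delta u\cdot\nabla b^2\|_{H^s}\lesssim\|\delta u\|_{H^s}\|b^2\|_{H^{s+1}}$ together with $\|\delta u\|_{H^s}\lesssim\|\delta u\|_{H^{s-1}}+\|\nabla\delta u\|_{H^{s-1}}$, and place the resulting quantity $\|b^2\|^2_{H^{s+1}}(\|\delta u\|^2_{H^{s-1}}+\|\nabla\delta u\|^2_{H^{s-1}})$ on the right-hand side as an additional source, precisely matching the form appearing in the statement.

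A Gronwall argument with integrating factor $e^{\mathrm{A}(t)}$ then closes both estimates. The hard part is really the asymmetry forced by the hyperbolic nature of the magnetic equation: one cannot close an $H^s$ estimate for $\delta b$ without paying the price of $\|b^2\|_{H^{s+1}}$, which explains why the Bona--Smith-type setup $(u^2,b^2)\in X_{s+1}$ is built into the hypothesis and why the extra source term, rather than being folded into $\mathrm{A}(t)$, has to remain separate.
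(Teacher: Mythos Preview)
Your proposal is correct and follows essentially the same route as the paper: derive the difference system, apply $\Delta_j$, pair with $(\Delta_j\delta u,\Delta_j\delta b)$, handle the transport pieces via the commutator estimate of Lemma~\ref{le2}, shift derivatives off $\delta b$ in the velocity equation by the divergence trick and absorb the resulting $\|\nabla\delta u\|$ into the dissipation, and treat $\delta u\cdot\nabla b^2$ exactly as you describe at both regularity levels before closing with Gronwall. The only cosmetic difference is that the paper also integrates by parts on $\delta u\cdot\nabla u^2$ (obtaining a $\|u^2\|_{H^s}$ coefficient rather than your $\|u^2\|_{H^{s+1}}$), but either bound sits inside $\mathrm{A}(t)$.
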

\begin{proof}
It is easy to show that
\beq\label{MHD1-1}\bca
\partial_t\delta u+u^1\cdot\nabla \delta u+\delta u\cd\nabla u^2-\De \delta u+\nabla P=b^1\cdot \nabla \delta b+\delta b\cd \na b^2, \\
\partial_t\delta b+u^1\cdot \nabla \delta b+\delta u\cd \na b^2=b^1\cdot\nabla \delta u+\delta b\cd \na u^2,\\
\mathrm{div}\delta u=\mathrm{div}\delta b=0,\quad (\delta u,\delta b)|_{t=0}=(\delta u_0,\delta b_0).
\eca\eeq
Now, we apply $\De_j$ to \eqref{MHD1-1}, and take the inner product with $(\De_j\delta u,\De_j\delta b)$ and integrate by parts to have
\bal\label{eq1}
\frac12\frac{\dd}{\dd t}(||\De_j\delta u||^2_{L^2}+||\De_j\delta b||^2_{L^2})+||\De_j\nabla\delta u||^2_{L^2}\leq K_1+K_2+K_3+K_4+K_5,
\end{align}
where
\bbal
&K_1=-\int_{\R^d}[\De_j,u^1\cdot \na]\delta u\cd \De_j\delta u\ \dd x, \quad \quad K_2=-\int_{\R^d}[\De_j,u^1\cdot \na]\delta b\cd \De_j\delta b\ \dd x,\\
&K_3=\int_{\R^d}\De_j(b^1\cdot \nabla \delta b+\delta b\cd \na b^2-\delta u\cd\nabla u^2)\De_j\delta u\ \dd x,\\
&K_4=\int_{\R^d}\De_j(b^1\cdot\nabla \delta u+\delta b\cd \na u^2)\De_j\delta b\ \dd x,\\
&K_5=-\int_{\R^d}\De_j(\delta u\cd\nabla b^2)\De_j\delta b\ \dd x.
\end{align*}
First, we estimate the above terms in the $B^{s-1}_{2,2}$ norm.  According to Lemmas \ref{le1}-\ref{le2}, it is easy to estimate
\bal\label{eq1-1}
\begin{split}
&|K_1|\lesssim ||[\De_j,u^1\cdot \na]\delta u||_{L^2}||\De_j\delta u||_{L^2}\lesssim 2^{-2j(s-1)}c^2_j||\na u^1||_{B^{s}_{2,2}}||\delta u||^2_{B^{s-1}_{2,2}},\\
&|K_2|\lesssim ||[\De_j,u^1\cdot \na]\delta b||_{L^2}||\De_j\delta b||_{L^2}\lesssim 2^{-2j(s-1)}c^2_j||\na u^1||_{B^{s}_{2,2}}||\delta b||^2_{B^{s-1}_{2,2}},\\
&|K_3|\lesssim 2^{-2j(s-1)}c^2_j(||b^1,b^2||_{B^{s}_{2,2}}||\nabla\delta u||_{B^{s-1}_{2,2}}||\delta b||_{B^{s-1}_{2,2}}
\\& \qquad \qquad +||u^2||_{H^s}||\delta u||_{B^{s-1}_{2,2}}||\nabla\delta u||_{B^{s-1}_{2,2}}),\\
&|K_4|\lesssim 2^{-2j(s-1)}c^2_j(||b^1||_{B^{s}_{2,2}}||\nabla\delta u||_{B^{s-1}_{2,2}}||\delta b||_{B^{s-1}_{2,2}}+||\nabla u^2||_{H^s}||\delta b||^2_{B^{s-1}_{2,2}}),
\\&|K_5|\lesssim 2^{-2j(s-1)}c^2_j||b^2||_{B^{s}_{2,2}}(||\delta u||_{B^{s-1}_{2,2}}+||\nabla\delta u||_{B^{s-1}_{2,2}})||\delta b||_{B^{s-1}_{2,2}}.
\end{split}\end{align}
Multiplying the inequality above by $2^{2j(s-1)}$ and summing over $j\geq -1$, we conclude from \eqref{eq1}-\eqref{eq1-1} that
\bbal
&\quad \frac{\dd}{\dd t}(||\delta u||^2_{B^{s-1}_{2,2}}+||\delta b||^2_{B^{s-1}_{2,2}})+||\na \delta u(t)||^2_{B^{s-1}_{2,2}}\nonumber
\\&\leq A'(t)(||\delta u||^2_{B^{s-1}_{2,2}}+||\delta b||^2_{B^{s-1}_{2,2}})+\frac12||\na \delta u||^2_{B^{s-1}_{2,2}},
\end{align*}
which leads to
\bbal
&\quad||\delta u(t)||^2_{B^{s-1}_{2,2}}+||\delta b(t)||^2_{B^{s-1}_{2,2}}+\int^t_0||\nabla \delta u||^2_{B^{s-1}_{2,2}}\dd \tau\\&\leq ||\delta u_0||^2_{B^{s-1}_{2,2}}+||\delta b_0||^2_{B^{s-1}_{2,2}}\nonumber+\int^t_0\mathrm{A}'(\tau)(||\delta u||^2_{B^{s-1}_{2,2}}+||\delta b||^2_{B^{s-1}_{2,2}})\dd \tau.
\end{align*}
This obtain the first result of this lemma. Now, we estimate the above terms in the $B^{s}_{2,2}$ norm. For the terms of $K_i,i=1,2,3,4,5$, we also infer from Lemmas \ref{le1}-\ref{le2} that
\bal\label{eq2-1}\begin{split}
&|K_1|\lesssim ||[\De_j,u^1\cdot \na]\delta u||_{L^2}||\De_j\delta u||_{L^2}\lesssim 2^{-2js}c^2_j||\na u^1||_{B^{s}_{2,2}}||\delta u||^2_{B^{s}_{2,2}},\\
&|K_2|\lesssim ||[\De_j,u^1\cdot \na]\delta b||_{L^2}||\De_j\delta b||_{L^2}\lesssim 2^{-2js}c^2_j||\na u^1||_{B^{s}_{2,2}}||\delta b||^2_{B^{s}_{2,2}},\\
&|K_3|\lesssim 2^{-2js}c^2_j(||b^1,b^2||_{B^{s}_{2,2}}||\nabla\delta u||_{B^{s}_{2,2}}||\delta b||_{B^{s}_{2,2}}+||u^2||_{H^s}||\delta u||_{B^{s}_{2,2}}||\nabla\delta u||_{B^{s}_{2,2}}),\\
&|K_4|\lesssim 2^{-2js}c^2_j(||b^1||_{B^{s}_{2,2}}||\nabla\delta u||_{B^{s}_{2,2}}||\delta b||_{B^{s}_{2,2}}+||\nabla u^2||_{H^s}||\delta b||^2_{B^{s}_{2,2}}),\\
&|K_5|\lesssim 2^{-2js}c^2_j(||\delta u,\nabla\delta u||_{B^{s-1}_{2,2}}||b^2||_{H^{s+1}})||\delta b||_{B^{s}_{2,2}}.
\end{split}\end{align}
Multiplying the inequality above by $2^{2j{s}}$ and summing over $j\geq -1$, we deduce from \eqref{eq1}, \eqref{eq2-1} that
\bbal
&\quad \frac{\dd}{\dd t}(||\delta u||^2_{B^{s}_{2,2}}+||\delta b||^2_{B^{s}_{2,2}})+||\na \delta u(t)||^2_{B^{s}_{2,2}}\nonumber
\\&\leq A'(t)(||\delta u||^2_{B^{s}_{2,2}}+||\delta b||^2_{B^{s}_{2,2}})+\frac12||\na \delta u||^2_{B^{s}_{2,2}}
\\&\quad +||b^2||^2_{H^{s+1}}(||\delta u||^2_{B^{s-1}_{2,2}}+||\nabla\delta u||^2_{B^{s-1}_{2,2}}),
\end{align*}
which implies
\bbal\begin{split}
&||\delta u(t)||^2_{B^{s}_{2,2}}+||\delta b(t)||^2_{B^{s}_{2,2}}+\int^t_0||\na \delta u(\tau)||^2_{B^{s}_{2,2}}\dd \tau
\\&\leq ||\delta u_0||^2_{B^{s}_{2,2}}+||\delta b_0||^2_{B^{s}_{2,2}}+\int^t_0\mathrm{A}'(\tau)(||\delta u||^2_{B^{s}_{2,2}}+||\delta b||^2_{B^{s}_{2,2}})\dd \tau
\\& \quad +C\int^t_0||b^2||^2_{H^{s+1}}(||\delta u||^2_{B^{s-1}_{2,2}}+||\nabla\delta u||^2_{B^{s-1}_{2,2}}) \dd \tau.
\end{split}\end{align*}
This completes the proof of this lemma.
\end{proof}

Then, in virtue of Lemma \ref{le-estimate}, we will show that the solution to the system \eqref{MHD} depend continuously on the initial data.

\noindent\textbf{Proof of Theorem \ref{th1}.} First, according to Lemma \ref{le0}, there exist a positive $T_n>0$ such that \eqref{MHD} have a solution $(u^n,b^n)\in X_s(T_n)$. Indeed, by Lemma \ref{le0}, we have
$$T_n\geq \frac{1}{2C_s(||u^n_0||^2_{H^{s}}+||b^n_0||^2_{H^{s}})}\geq \frac{1}{2C_sR}:=T,$$
with $R=\sup\limits_{n\geq 0}(||u^n_0||_{H^{s}}^2+||b^n_0||_{H^{s}}^2)$. Moreover, we also obtain from Lemma \ref{le0} that for all $t\in[0,T]$,
\bbal
||u^n(t)||^2_{B^{s}_{2,2}}+||b^n(t)||^2_{B^{s}_{2,2}}+\int^t_0||\na u^n||^2_{B^{s}_{2,2}}\dd \tau
\leq C(||u^n_0||^2_{B^{s}_{2,2}}+||b^n_0||^2_{B^{s}_{2,2}}).
\end{align*}
Since $s+1>1+\frac d2$, we use the similar method as in Lemma \ref{le-estimate} to have for all $t\in[0,T]$,
\bal\label{eq3-0}\begin{split}
&\quad||u^n(t)||^2_{B^{s+1}_{2,2}}+||b^n(t)||^2_{B^{s+1}_{2,2}}+\int^t_0||\na u^n||^2_{B^{s+1}_{2,2}}\dd \tau
\\&\leq (||u^n_0||^2_{B^{s+1}_{2,2}}+||b^n_0||^2_{B^{s+1}_{2,2}})
e^{C\int^t_0(1+||u^n||^2_{B^{s+1}_{2,2}}+||b^n||^2_{B^{s}_{2,2}})\dd \tau}
\\&\leq C(||u^n_0||^2_{B^{s+1}_{2,2}}+||b^n_0||^2_{B^{s+1}_{2,2}}).
\end{split}\end{align}
Let $(u^n_j,b^n_j)\in \mathcal{C}([0,T];B^{s+1}_{2,2})$ be the approximate equations of the system \eqref{MHD}:
\bbal\begin{cases}
\partial_tu^n_j+u^n_j\cdot\nabla u^n_j-\De u^n_j+\nabla P_{n,j}=b^n_j\cdot \nabla b^n_j, \\
\partial_tb^n_j+u^n_j\cdot \nabla b^n_j=b^n_j\cdot\nabla u^n_j,\\
\mathrm{div} u^n_j=\mathrm{div} b^n_j=0,\quad (u^n_j,b^n_j)|_{t=0}=S_j(u^n_0,b^n_0).
\end{cases}\end{align*}
Then, according to Lemma \ref{le-estimate}, we have for all $t\in[0,T]$,
\bal\label{eq3-1}\begin{split}
&\quad||u^n_j(t)-u^n(t)||^2_{B^{s-1}_{2,2}}+||b^n_j(t)-b^n(t)||^2_{B^{s-1}_{2,2}}+\int^t_0||\nabla(u^n_j-u^n)||^2_{B^{s-1}_{2,2}}\dd \tau
\\&\leq  C(||(\mathrm{Id}-S_j)u^n_0||^2_{B^{s-1}_{2,2}}+||(\mathrm{Id}-S_j)b^n_0||^2_{B^{s-1}_{2,2}}).
\end{split}\end{align}
By \eqref{eq3-0}, we have
\bal\label{eq3-2}
||u^n_j||^2_{B^{s+1}_{2,2}}+||b^n_j||^2_{B^{s+1}_{2,2}}\leq C(||S_ju^n_0||^2_{B^{s+1}_{2,2}}+||S_jb^n_0||^2_{B^{s+1}_{2,2}})\leq C2^j.
\end{align}
Using Lemma \ref{le-estimate} again and combining \eqref{eq3-1}-\eqref{eq3-2} yields
\bal\label{eq3-3}\begin{split}
& \quad \ ||u^n_j(t)-u^n(t)||^2_{B^{s}_{2,2}}+||b^n_j(t)-b^n(t)||^2_{B^{s}_{2,2}}\\&\leq C(||(\mathrm{Id}-S_j)u^n_0||^2_{B^{s}_{2,2}}+||(\mathrm{Id}-S_j)b^n_0||^2_{B^{s}_{2,2}}\\& \quad \ +\int^t_0||b^n_j||^2_{B^{s+1}_{2,2}}(||u^n_j-u^n||^2_{B^{s-1}_{2,2}}+||\nabla u^n_j-\nabla u^n||^2_{B^{s-1}_{2,2}}) \dd \tau)
\\&\leq C(||(\mathrm{Id}-S_j)u^n_0||^2_{B^{s}_{2,2}}+||(\mathrm{Id}-S_j)b^n_0||^2_{B^{s}_{2,2}}
\\& \quad \ +2^{2j}||(\mathrm{Id}-S_j)u^n_0||^2_{B^{s-1}_{2,2}}+2^{2j}||(\mathrm{Id}-S_j)b^n_0||^2_{B^{s-1}_{2,2}})
\\& \leq C(||(\mathrm{Id}-S_j)u^n_0||^2_{B^{s}_{2,2}}+||(\mathrm{Id}-S_j)b^n_0||^2_{B^{s}_{2,2}}).
\end{split}\end{align}
It is easy to check that
\bal\label{eq3-4}\begin{split}
&\quad||u^n_j-u^\infty_j||^2_{L^\infty_T(B^{s}_{2,2})}+||b^n_j-b^\infty_j||^2_{L^\infty_T(B^{s}_{2,2})}
\\&\leq ||u^n_j-u^\infty_j||_{L^\infty_T(B^{s-1}_{2,2})}||u^n_j-u^\infty_j||_{L^\infty_T(B^{s+1}_{2,2})}
\\&\quad+||b^n_j-b^\infty_j||_{L^\infty_T(B^{s-1}_{2,2})}||b^n_j-b^\infty_j||_{L^\infty_T(B^{s+1}_{2,2})}
\\&\leq C2^j(||u^n_0-u^\infty_0||_{B^{s}_{2,2}}+||b^n_0-u^\infty_0||_{B^{s}_{2,2}})
\end{split}\end{align}
Therefore, combing \eqref{eq3-3} and \eqref{eq3-4}, we obtain
\bbal
& \quad \ ||u^n-u^\infty||^2_{L^\infty_T(B^{s}_{2,2})}+||b^n-b^\infty||^2_{L^\infty_T(B^{s}_{2,2})}\\&\leq ||u^n_j-u^\infty_j||^2_{L^\infty_T(B^{s}_{2,2})}+||b^n_j-b^\infty_j||^2_{L^\infty_T(B^{s}_{2,2})}
\\&\quad  +||u^n_j-u^n||^2_{L^\infty_T(B^{s}_{2,2})}+||b^n_j-b^n||^2_{L^\infty_T(B^{s}_{2,2})}
\\&\quad +||u^\infty_j-u^\infty||^2_{L^\infty_T(B^{s}_{2,2})}+||b^\infty_j-b^\infty||^2_{L^\infty_T(B^{s}_{2,2})}
\\& \leq  C(||(\mathrm{Id}-S_j)u^n_0||^2_{B^{s}_{2,2}}+||(\mathrm{Id}-S_j)b^n_0||^2_{B^{s}_{2,2}}+||(\mathrm{Id}-S_j)u^\infty_0||^2_{B^{s}_{2,2}}\\& \quad \ +||(\mathrm{Id}-S_j)b^\infty_0||^2_{B^{s}_{2,2}}+
2^{j}||u^n_0-u_0^\infty||^2_{B^{s}_{2,2}}+2^{j}||b^n_0-b_0^\infty||^2_{B^{s}_{2,2}})
\\& \leq  C(||(\mathrm{Id}-S_j)u_0^\infty||^2_{B^{s}_{2,2}}+||(\mathrm{Id}-S_j)b_0^\infty||^2_{B^{s}_{2,2}}+2^{j}||u^n_0-u_0^\infty||_{B^{s}_{2,2}}+2^{j}||b^n_0-b_0^\infty||_{B^{s}_{2,2}}).
\end{align*}
This completes the proof of Theorem \ref{th1}. \\

\section{Non-uniform continuous dependence}

In this section, we will give the proof of the second theorem. Motivated by Himonas and Holliman \cite{HM}, we first construct two sequence approximate solutions where velocity tends to 0 and magnetic is bounded in Sobolev $H^{s}$ norms. Lately, we will show that the distance between approximate solutions and actual solutions is decaying. Finally, we can conclude that Cauchy problem \eqref{MHD} is not uniformly continuous in Sobolev $B^{s}_{2,2}$ spaces.

Set $0<\delta<\frac13$. Now, we choose the magnetic having the following form (see \cite{HM}):
\bbal
b^{\omega,n}=b^{l,\omega,n}(t,x)+b^{h,\omega,n}(t,x), \qquad \omega\in\{\pm1\},\ x\in\R^d,\ t\in \R,
\end{align*}
where $b^{h,\omega,n}$ is the high frequency term
\bbal
b^{h,\omega,n}(t,x)=
\begin{cases}
\mathrm{rot}\phi^h(t,x)=\Big(\pa_2\phi^{h,\omega,n}(t,x),-\pa_1\phi^{h,\omega,n}(t,x)\Big), \qquad d=2,\\
\Big(\pa_2\phi^{h,\omega,n}(t,x),-\pa_1\phi^{h,\omega,n}(t,x),0\Big), \qquad \qquad \qquad \quad d=3,
\end{cases}
\end{align*}
with
\bbal
\phi^{h,\omega,n}(t,x)=
\begin{cases}
n^{-\delta-s-1}\phi(\frac{x_1}{n^\delta})\phi(\frac{x_2}{n^\delta})\sin(n x_2-\omega t), \qquad \quad d=2,\\
n^{-\delta-s-1}\phi(\frac{x_1}{n^\delta})\phi(\frac{x_2}{n^\delta})\sin(n x_2-\omega t)\phi(x_3) \quad d=3,
\end{cases}
\qquad n \in \Z,
\end{align*}
where $\phi\in C_c^\infty(\R)$ satisfies $\text{supp} \ \phi \in [-1,1]$ and $\phi(x)\equiv 1$ for $|x| < \frac{1}{2}$.
Let $(u^{\omega,n},b^{l,\omega,n})$ satisfies the following initial value problem:
\begin{equation}\label{eq-app}\begin{cases}
\partial_tu^{\omega,n}+u^{\omega,n}\cdot\nabla u^{\omega,n}-\Delta u^{\omega,n}+\nabla P^{\omega,n}=b^{l,\omega,n}\cdot \nabla b^{l,\omega,n}, \\
\partial_tb^{l,\omega,n}+u^{\omega,n}\cdot \nabla b^{l,\omega,n}=b^{l,\omega,n}\cdot\nabla u^{\omega,n},\\
\mathrm{div} u^{\omega,n}=\mathrm{div} b^{l,\omega,n}=0,\quad (u^{\omega,n},b^{l,\omega,n})|_{t=0}=(b^{l,\omega,n}_0,b^{l,\omega,n}_0),
\end{cases}\end{equation}
\bbal
b^{l,\omega,n}_0(x)=
\begin{cases}
\mathrm{rot}\phi^l(x)=\Big(\pa_2\phi^{l,\omega,n}(x),-\pa_1\phi^{l,\omega,n}(x)\Big), \qquad d=2,\\
\Big(\pa_2\phi^{l,\omega,n}(x),-\pa_1\phi^{l,\omega,n}(x),0\Big), \qquad \qquad \qquad \quad d=3,
\end{cases}
\end{align*}
with
\bbal
\phi^{l,\omega,n}(x)=
\begin{cases}
-\omega n^{-1+\delta}\Phi_1(\frac{x_1}{n^{\delta}})\Phi_2(\frac{x_2}{n^{\delta}}), \qquad \quad d=2,\\
-\omega n^{-1+\delta}\Phi_1(\frac{x_1}{n^{\delta}})\Phi_2(\frac{x_2}{n^{\delta}})\Phi_2(\frac{x_3}{n^{\delta}}) \quad d=3,
\end{cases}
\qquad n \in \Z,
\end{align*}
where the localizing functions $\Phi_1, \Phi_2 \in C_c^\infty(\R)$ are chosen such that $\Phi'_1= \Phi_2\equiv 1$ on the support of $\phi$. By the well-posedness result, the $(u^{\omega,n},b^{l,\omega,n})$ belong to $C([0,T];B^s_{2,2}\times B^s_{2,2})$ and have lifespan $T\simeq 1$.
Thus, we can find that $(u^{\omega,n},b^{\omega,n})$ satisfies the following equations:
\begin{equation*}\begin{cases}
\partial_tu^{\omega,n}+u^{\omega,n}\cdot\nabla u^{\omega,n}-\Delta u^{\omega,n}+\nabla \widetilde{P}^{\omega,n}=b^{\omega,n}\cdot\na b^{\omega,n}+\D E^{\omega,n}, \\
\partial_tb^{\omega,n}+u^{\omega,n}\cdot \nabla b^{\omega,n}=b^{\omega,n}\cdot\na u^{\omega,n}+F^{\omega,n},\\
\mathrm{div} u^{\omega,n}=\mathrm{div} b^{\omega,n}=0,\quad (u^{\omega,n},b^{\omega,n})|_{t=0}=(b^{l,\omega,n}_0,b^{l,\omega,n}_0+b^{h,\omega,n}_0),
\end{cases}\end{equation*}
where
\bbal
&E^{\omega,n}=-b^{l,\omega,n}\otimes b^{h,\omega,n}-b^{h,\omega,n}\otimes b^{h,\omega,n}-b^{h,\omega,n}\otimes b^{l,\omega,n},
\\&F^{\omega,n}=\underbrace{\pa_tb^{h,\omega,n}+u^{\omega,n}\cd \na b^{h,\omega,n}}_{F^{\omega,n}_1}\underbrace{-b^{h,\omega,n}\cd\na u^{\omega,n}}_{F^{\omega,n}_2}.
\end{align*}
{\bf First, we consider the 2D case.} To estimate the terms $E^{\omega,n}$ and $F^{\omega,n}$ in Sobolev $B^{s-1}_{2,2}$ norm, we first estimate the terms $u^{\omega,n}$ and $b^{\omega,n}$. By \eqref{eq3-0}, Lemma \ref{le0} and Lemma \ref{le3}, we have for any $r\geq 0$ and $t\in[0,T]$,
\bal\label{eq4-0}\begin{split}
&||u^{\omega,n}||_{B^{s+1}_{2,2}}+||b^{l,\omega,n}||_{B^{s+1}_{2,2}}\leq C||b^{l,\omega,n}_0||_{B^{s+1}_{2,2}}\leq Cn^{-1+\delta},
\\&||b^{h,\omega,n}||_{H^r}\leq Cn^{r-s}, \qquad ||b^{h,\omega,n}||_{L^\infty}\leq Cn^{-1}.
\end{split}\end{align}
For the term $E^{\omega,n}$, we also infer from Lemma \ref{le1} that
\bal\label{eq4-0.1}\begin{split}
||E^{\omega,n}||_{B^{s-1}_{2,2}}&\leq C||b^{l,\omega,n}||_{B^{s}_{2,2}}||b^{h,\omega,n}||_{B^{s-1}_{2,2}}+C||b^{h,\omega,n}||_{L^\infty}||b^{h,\omega,n}||_{B^{s-1}_{2,2}}
\\&\leq Cn^{-1}\big(n^{-1+\delta}+n^{-\delta}\big).
\end{split}\end{align}
Now, we need to estimate the term $F^{\omega,n}$ . Direct calculation shows that
\bbal
\Big(b^{h,\omega,n}\Big)_1&=n^{-\delta-s}\phi(\frac{x_1}{n^\delta})\phi(\frac{x_2}{n^\delta})\cos(n x_2-\omega t)
\\&\quad+n^{-2\delta-s-1}\phi(\frac{x_1}{n^\delta})\phi'(\frac{x_2}{n^\delta})\sin(n x_2-\omega t),
\end{align*}

\bbal
\Big(b^{h,\omega,n}\Big)_2=-n^{-2\delta-s-1}\phi'(\frac{x_1}{n^\delta})\phi(\frac{x_2}{n^\delta})\sin(n x_2-\omega t),
\end{align*}

\bbal
\Big(\pa_tb^{h,\omega,n}\Big)_1(t,x)&=\omega n^{-s-\delta}\phi(\frac{x_1}{n^\delta})\phi(\frac{x_2}{n^\delta})\sin(nx_2-\omega t)
\\& \quad -\omega n^{-s-1-2\delta}\phi(\frac{x_1}{n^\delta})\phi'(\frac{x_2}{n^\delta})\cos(nx_2-\omega t),
\end{align*}

\bbal
\Big(\pa_tb^{h,\omega,n}\Big)_2(t,x)=\omega n^{-s-1-2\delta}\phi'(\frac{x_1}{n^\delta})\phi(\frac{x_2}{n^\delta})\cos(nx_2-\omega t),
\end{align*}

\bbal
\Big(u^{\omega,n}\cd \na b^{h,\omega,n}\Big)_1(t,x)&=-n^{-s+1-\delta}u^{\omega,n}_2(t,x)\phi(\frac{x_1}{n^\delta})\phi(\frac{x_2}{n^\delta})\sin(nx_2-\omega t)
\\&\quad +n^{-3\delta-s-1}u^{\omega,n}_2(t,x)\phi(\frac{x_1}{n^\delta})\phi''(\frac{x_2}{n^\delta})\sin(n x_2-\omega t)
\\&\quad +2n^{-2\delta-s}u^{\omega,n}_2(t,x)\phi(\frac{x_1}{n^\delta})\phi'(\frac{x_2}{n^\delta})\cos(n x_2-\omega t)
\\&\quad +n^{-2\delta-s}u^{\omega,n}_1(t,x)\phi'(\frac{x_1}{n^\delta})\phi(\frac{x_2}{n^\delta})\cos(n x_2-\omega t)
\\&\quad +n^{-3\delta-s-1}u^{\omega,n}_1(t,x)\phi'(\frac{x_1}{n^\delta})\phi'(\frac{x_2}{n^\delta})\sin(n x_2-\omega t),
\end{align*}

\bbal
\Big(u^{\omega,n}\cd \na b^{h,\omega,n}\Big)_2(t,x)&=-n^{-3\delta-s-1}u^{\omega,n}_1(t,x)\phi''(\frac{x_1}{n^\delta})\phi(\frac{x_2}{n^\delta})\sin(n x_2-\omega t)
\\&\quad -n^{-3\delta-s-1}u^{\omega,n}_2(t,x)\phi'(\frac{x_1}{n^\delta})\phi'(\frac{x_2}{n^\delta})\sin(n x_2-\omega t)
\\&\quad -n^{-2\delta-s}u^{\omega,n}_2(t,x)\phi'(\frac{x_1}{n^\delta})\phi(\frac{x_2}{n^\delta})\cos(n x_2-\omega t).
\end{align*}
Let us handle with the first component of $F^{\omega,n}$. Using the fact
\bbal
&\quad \omega n^{-s-\delta}\phi(\frac{x_1}{n^\delta})\phi(\frac{x_2}{n^\delta})\sin(nx_2-\omega t)
\\&=n^{-s+1-\delta}u^{\omega,n}_2(0,x)\phi(\frac{x_1}{n^\delta})\phi(\frac{x_2}{n^\delta})\sin(nx_2-\omega t),
\end{align*}
we can obtain
\bbal
&\quad\Big(\pa_tb^{h,\omega,n}+u^{\omega,n}\cd \na b^{h,\omega,n}\Big)_1(t,x)
\\&=-n^{-s+1-\delta}[u^{\omega,n}_2(t,x)-u^{\omega,n}_2(0,x)]\phi(\frac{x_1}{n^\delta})\phi(\frac{x_2}{n^\delta})\sin(nx_2-\omega t)
\\& \quad -\omega n^{-s-1-2\delta}\phi(\frac{x_1}{n^\delta})\phi'(\frac{x_2}{n^\delta})\cos(nx_2-\omega t)
\\&\quad +n^{-3\delta-s-1}u^{\omega,n}_2(t,x)\phi(\frac{x_1}{n^\delta})\phi''(\frac{x_2}{n^\delta})\sin(n x_2-\omega t)
\\&\quad +n^{-2\delta-s}u^{\omega,n}_1(t,x)\phi'(\frac{x_1}{n^\delta})\phi(\frac{x_2}{n^\delta})\cos(n x_2-\omega t)
\\&\quad +n^{-3\delta-s-1}u^{\omega,n}_1(t,x)\phi'(\frac{x_1}{n^\delta})\phi'(\frac{x_2}{n^\delta})\sin(n x_2-\omega t)
\\&\quad
+2n^{-2\delta-s}u^{\omega,n}_2(t,x)\phi(\frac{x_1}{n^\delta})\phi'(\frac{x_2}{n^\delta})\cos(n x_2-\omega t):=\sum^6_{i=1}F_{1,1}^{\omega,n}(i).
\end{align*}
To estimathe $F_{1,1}^{\omega,n}(1)$, we can deduce from the first equation of \eqref{eq-app} that
\bal\label{eq4-1}
&||u^{\omega,n}(t,x)-u^{\omega,n}(0,x)||_{B^{s}_{2,2}} \nonumber\\
&\leq ||u^{\omega,n}(t,x)-e^{t\Delta}u^{\omega,n}(0,x)||_{B^{s}_{2,2}}+||e^{t\Delta}u^{\omega,n}(0,x)-u^{\omega,n}(0,x)||_{B^{s}_{2,2}} \nonumber\\
&\leq ||\int_0^t e^{(t-\tau)\Delta} \mathbf{P} (-u^{\omega,n}\cd \na u^{\omega,n}+b^{l,\omega,n}\cd\na b^{l,\omega,n}) d\tau ||_{B^{s}_{2,2}} \nonumber\\
& \quad + ||\int_0^t \frac{d}{d\tau} (e^{\tau \Delta} u^{\omega,n}(0,x)) d\tau||_{B^{s}_{2,2}} \nonumber\\
&\leq C||-u^{\omega,n}\cd \na u^{\omega,n}+b^{l,\omega,n}\cd\na b^{l,\omega,n}||_{B^{s}_{2,2}} + C||\Delta u^{\omega,n}(0,x)||_{B^{s}_{2,2}} \nonumber\\
&\leq Cn^{2(-1+\delta)}+C n^{-1-\delta},
\end{align}
where $\mathbf{P}:=I+\nabla(-\Delta)^{-1}\operatorname{div}$  is the $d\times d$ matrix pseudo-differential operator in
$\mathbb{R}^d$ with the symbol
$(\delta_{ij}-\frac{\xi_i\xi_j}{|\xi|^2})_{i,j=1}^d$,\ d=2,\ 3.
By Lemma \ref{le1} and \eqref{eq4-1}, we have
\bbal
&||F_{1,1}^{\omega,n}(1)||_{B^{s-1}_{2,2}}\lesssim n^{-2+2\delta}+n^{-1-\delta},
\\&||F_{1,1}^{\omega,n}(2)||_{B^{s-1}_{2,2}}\lesssim n^{-2-\delta},
\\&||F_{1,1}^{\omega,n}(3)||_{B^{s-1}_{2,2}}\lesssim n^{-2-\delta},
\\&||F_{1,1}^{\omega,n}(4)||_{B^{s-1}_{2,2}}\lesssim n^{-1-\delta},
\\&||F_{1,1}^{\omega,n}(5)||_{B^{s-1}_{2,2}}\lesssim n^{-2-2\delta},
\\&||F_{1,1}^{\omega,n}(6)||_{B^{s-1}_{2,2}}\lesssim n^{-1-\delta},
\end{align*}
which yield
\bal\label{eq4-2}
||F^{\omega,n}_{1,1}||_{B^{s-1}_{2,2}}=||\sum^6_{i=1}F_{1,1}^{\omega,n}(i)||_{B^{s-1}_{2,2}}\leq Cn^{-1}\big(n^{-1+2\delta}+n^{-\delta}\big).
\end{align}
For the second component of $F^{\omega,n}$, we also have
\bbal
&\quad\Big(\pa_tb^{h,\omega,n}+u^{\omega,n}\cd \na b^{h,\omega,n}\Big)_2(t,x)\\&=\omega n^{-s-1-2\delta}\phi'(\frac{x_1}{n^\delta})\phi(\frac{x_2}{n^\delta})\cos(nx_2-\omega t)
\\&\quad-n^{-3\delta-s-1}u^{\omega,n}_1(t,x)\phi''(\frac{x_1}{n^\delta})\phi(\frac{x_2}{n^\delta})\sin(n x_2-\omega t)
\\&\quad -n^{-3\delta-s-1}u^{\omega,n}_2(t,x)\phi'(\frac{x_1}{n^\delta})\phi'(\frac{x_2}{n^\delta})\sin(n x_2-\omega t)
\\&\quad -n^{-2\delta-s}u^{\omega,n}_2(t,x)\phi'(\frac{x_1}{n^\delta})\phi(\frac{x_2}{n^\delta})\cos(n x_2-\omega t):=\sum^4_{i=1}F_{1,2}^{\omega,n}(i).
\end{align*}
By Lemma \ref{le1}, we have
\bbal
&||F_{1,2}^{\omega,n}(1)||_{B^{s-1}_{2,2}}\lesssim n^{-2-\delta},
\\&||F_{1,2}^{\omega,n}(2)||_{B^{s-1}_{2,2}}\lesssim n^{-2-2\delta},
\\&||F_{1,2}^{\omega,n}(3)||_{B^{s-1}_{2,2}}\lesssim n^{-2-2\delta},
\\&||F_{1,2}^{\omega,n}(4)||_{B^{s-1}_{2,2}}\lesssim n^{-1-\delta},
\end{align*}
which yield
\bal\label{eq4-3}
||F^{\omega,n}_{1,2}||_{B^{s-1}_{2,2}}=||\sum^4_{i=1}F_{1,2}^{\omega,n}(i)||_{B^{s-1}_{2,2}}\leq Cn^{-1-\delta}.
\end{align}
Therefore, using Lemma \ref{le1}, \eqref{eq4-0} and \eqref{eq4-2}-\eqref{eq4-3}, we obtain
\bal\begin{split}\label{eq4-4}
||F^{\omega,n}||_{B^{s-1}_{2,2}}&\leq ||F^{\omega,n}_{1,1}||_{B^{s-1}_{2,2}}+ ||F^{\omega,n}_{1,2}||_{B^{s-1}_{2,2}}+||b^{h,\omega,n}||_{B^{s-1}_{2,2}}||u^{\omega,n}||_{B^{s+1}_{2,2}}
\\&\leq Cn^{-1}\big(n^{-1+2\delta}+n^{-\delta}\big).
\end{split}\end{align}

Let $(u_{\omega,n},b_{\omega,n})$ be the unique solution of 2D non-resistive MHD equations. Namely,
\begin{equation*}\begin{cases}
\partial_tu_{\omega,n}+u_{\omega,n}\cdot\nabla u_{\omega,n}-\Delta u_{\omega,n}+\nabla P_{\omega,n}=b_{\omega,n}\cdot \nabla b_{\omega,n}, \\
\partial_tb_{\omega,n}+u_{\omega,n}\cdot \nabla b_{\omega,n}=b_{\omega,n}\cdot\nabla u_{\omega,n},\\
\mathrm{div} u_{\omega,n}=\mathrm{div} b_{\omega,n}=0,\quad (u_{\omega,n},b_{\omega,n})|_{t=0}=(b^{l,\omega,n}_0,b^{l,\omega,n}_0+b^{h,\omega,n}(0,x)).
\end{cases}\end{equation*}
By the well-posedness result, the $(u_{\omega,n},b_{\omega,n})$ belong to $C([0,T];B^s_{2,2}\times B^s_{2,2})$ and have common lifespan $T\simeq 1$. Notice that $b^{h,1,n}(0,x)=b^{h,-1,n}(0,x)$, we have
\bal\begin{split}\label{eq4-5}
&\qquad||u_{1,n}(0,x)-u_{-1,n}(0,x)||_{B^{s}_{2,2}}+||b_{1,n}(0,x)-b_{-1,n}(0,x)||_{B^{s}_{2,2}}
\\&\leq C||b^{l,1,n}_0-b^{l,-1,n}_0||_{B^{s}_{2,2}}\leq Cn^{-1+\delta}\rightarrow 0, \qquad n\rightarrow \infty.
\end{split}\end{align}
According to Lemma \ref{le-estimate}, we get from \eqref{eq3-0} that
\bal\begin{split}\label{eq4-6}
&||u_{\omega,n}||^2_{L^\infty_T(B^{s-1}_{2,2})}+||\nabla u_{\omega,n}||^2_{L^2_T(B^{s-1}_{2,2})}+||b_{\omega,n}||^2_{L^\infty_T(B^{s-1}_{2,2})}\leq Cn^{-2+2\delta},
\\&||u_{\omega,n}||^2_{L^\infty_T(B^{s}_{2,2})}+||\nabla u_{\omega,n}||^2_{L^2_T(B^{s}_{2,2})}+||b_{\omega,n}||^2_{L^\infty_T(B^{s}_{2,2})}\leq C,
\\&||u_{\omega,n}||^2_{L^\infty_T(B^{s+1}_{2,2})}+||\nabla u_{\omega,n}||^2_{L^2_T(B^{s+1}_{2,2})}+||b_{\omega,n}||^2_{L^\infty_T(B^{s+1}_{2,2})}\leq Cn^2
\end{split}\end{align}
Next, considering the difference $v=u_{\omega,n}-u^{\omega,n}$ and $a=b_{\omega,n}-b^{\omega,n}$, we observe that $(v,a)$ satifies
\begin{equation*}\begin{cases}
\partial_tv-\Delta v+\nabla \widetilde{P}_{\omega,n}=\D E^{\omega,n}+\D G, \\
\partial_ta+u_{\omega,n}\cdot \nabla a=F^{\omega,n}+J,\\
\mathrm{div} v=\mathrm{div} a=0,\quad (v,a)|_{t=0}=(0,0),
\end{cases}\end{equation*}
where
\bbal
&G=\underbrace{b_{\omega,n}\otimes a+a\otimes b^{\omega,n}}_{G_1}\underbrace{-v\otimes u^{\omega,n}-u_{\omega,n}\otimes v}_{G_2}, \
\\& J=\underbrace{b_{\omega,n}\cdot\nabla v}_{J_1}+\underbrace{a\cdot\nabla u^{\omega,n}}_{J_2}\underbrace{-v\cd\na b^{\omega,n}}_{J_3}.
\end{align*}
Next, we shall bound the term $(v,a)$ in Sobolev $B^{s-1}_{2,2}$ norm. According to Lemma \ref{le-estimate}, we have
\bal\begin{split}\label{eq4-7}
&\quad\frac{\dd}{\dd t}\big(||v||^2_{B^{s-1}_{2,2}}+||a||^2_{B^{s-1}_{2,2}}\big)+||\nabla v||^2_{B^{s-1}_{2,2}}
\\&\leq C||E^{\omega,n}||^2_{B^{s-1}_{2,2}}+C||G||^2_{B^{s-1}_{2,2}}+C||F^{\omega,n}||_{B^{s-1}_{2,2}}||a||_{B^{s-1}_{2,2}}
\\&\quad+C||J||_{B^{s-1}_{2,2}}||a||_{B^{s-1}_{2,2}}+\frac14||\nabla v||^2_{B^{s-1}_{2,2}}.
\end{split}\end{align}
By Lemma \ref{le1}, \eqref{eq4-0} and \eqref{eq4-6}, we get
\bal\label{eq4-8}\begin{split}
&||G_1||_{B^{s-1}_{2,2}}\leq C||a||_{B^{s-1}_{2,2}}\big(||b^{\omega,n}||_{B^{s}_{2,2}}+||b_{\omega,n}||_{B^{s}_{2,2}}\big),
\\&||G_2||_{B^{s-1}_{2,2}}\leq C||a||_{B^{s-1}_{2,2}}\big(||u^{\omega,n}||_{B^{s}_{2,2}}+||u_{\omega,n}||_{B^{s}_{2,2}}\big),
\\&||J_1||_{B^{s-1}_{2,2}}\leq C||b_{\omega,n}||_{B^{s}_{2,2}}||\nabla v||_{B^{s-1}_{2,2}},
\\&||J_2||_{B^{s-1}_{2,2}}\leq C||a||_{B^{s-1}_{2,2}}||u^{\omega,n}||_{B^{s+1}_{2,2}},
\\&||J_3||_{B^{s-1}_{2,2}}\leq C||v||_{B^{s}_{2,2}}||b^{\omega,n}||_{B^{s}_{2,2}}\leq C||v,\nabla v||_{B^{s-1}_{2,2}}||b^{\omega,n}||_{B^{s}_{2,2}}.
\end{split}\end{align}
Plugging \eqref{eq4-8} into \eqref{eq4-7} and using \eqref{eq4-0.1}, \eqref{eq4-4}, we have for all $t\in[0,T]$,
\bal\label{eq4-9}\begin{split}
&\quad\frac{\dd}{\dd t}\big(||v||^2_{B^{s-1}_{2,2}}+||a||^2_{B^{s-1}_{2,2}}\big)
\\&\leq C\big(||v||^2_{B^{s-1}_{2,2}}+||a||^2_{B^{s-1}_{2,2}}\big)+||E^{\omega,n}||^2_{B^{s-1}_{2,2}}+||F^{\omega,n}||^2_{B^{s-1}_{2,2}}
\\&\leq C\big(||v||^2_{B^{s-1}_{2,2}}+||a||^2_{B^{s-1}_{2,2}}\big)+Cn^{-2}\big(n^{-1+2\delta}+n^{-\delta}\big)^2.
\end{split}\end{align}
This alongs with Growall's inequallity yields
\bal\label{eq4-9-1}
||v||^2_{L^\infty_T(B^{s-1}_{2,2})}+||a||^2_{L^\infty_T(B^{s-1}_{2,2})}\leq Cn^{-2}\big(n^{-1+2\delta}+n^{-\delta}\big)^2.
\end{align}
According to the interpolation inequality and using \eqref{eq4-0}, \eqref{eq4-6}, \eqref{eq4-9-1}, we have
\bal\label{eq4-10}\begin{split}
&\quad||v||^2_{{L^\infty_T(B^{s}_{2,2})}}+||a||^2_{{L^\infty_T(B^{s}_{2,2})}}
\\&\leq \big(||v||^2_{L^\infty_T(B^{s-1}_{2,2})}+||a||^2_{L^\infty_T(B^{s-1}_{2,2})}\big)^{\frac12}\big(||v||^2_{L^\infty_T(B^{s+1}_{2,2})}+||a||^2_{L^\infty_T(B^{s+1}_{2,2})}\big)^{\frac12}
\\&\leq C\big(n^{-1+2\delta}+n^{-\delta}\big).
\end{split}\end{align}
Then, combining \eqref{eq4-0} and \eqref{eq4-10}, we have
\begin{align*}\begin{split}
&\quad||u_{1,n}(t)-u_{-1,n}(t)||_{B^{s}_{2,2}}+||b_{1,n}(t)-b_{-1,n}(t)||_{B^{s}_{2,2}}
\\&\geq ||u^{1,n}(t)-u^{-1,n}(t)||_{B^{s}_{2,2}}+||b^{1,n}(t)-b^{-1,n}(t)||_{B^{s}_{2,2}}-C\varepsilon_n
\\&\geq ||b^{h,1,n}(t)-b^{h,-1,n}(t)||_{B^{s}_{2,2}}-C\varepsilon'_n
\\&\geq 2|\sin t|\cdot||n^{-\delta-s}\phi(\frac{x_1}{n^\delta})\phi(\frac{x_2}{n^\delta})\sin(n x_2)||_{B^{s}_{2,2}}-C\varepsilon'_n
\\&\geq c_0|\sin t|\cdot||n^{-\frac12\delta}\phi(\frac{x_1}{n^\delta})||_{H^{s}(\R)}||n^{-\frac12\delta-s}\phi(\frac{x_2}{n^\delta})\sin(n x_2)||_{H^{s}(\R)}-C\varepsilon'_n,
\end{split}\end{align*}
where
\bbal
\varepsilon_n=(n^{-\delta}+n^{2\delta-1})^{\frac12}, \qquad \varepsilon'_n=(n^{-\delta}+n^{2\delta-1})^{\frac12}+n^{\delta-1}.
\end{align*}
Letting $n$ go to $\infty$, then there exists some positive constant $c_0$ such that
\begin{align}\label{eq4-11}\begin{split}
&\liminf_{n\rightarrow \infty}\big(||u_{1,n}(t)-u_{-1,n}(t)||_{B^{s}_{2,2}}+||b_{1,n}(t)-b_{-1,n}(t)||_{B^{s}_{2,2}}\big)
\\&\geq c_0|\sin t|.
\end{split}\end{align}
Then \eqref{eq4-11} together with \eqref{eq4-5} complete the proof of the 2D case of Theorem \ref{th2}.

{\bf Now, we consider the 3D case.} Similarly, to estimate the terms $E^{\omega,n}$ and $F^{\omega,n}$ in Sobolev $B^{s-1}_{2,2}$ norm, we first estimate the terms $u^{\omega,n}$ and $b^{\omega,n}$. By \eqref{eq3-0}, Lemma \ref{le0} and Lemma \ref{le3}, we get for any $r\geq 0$ and $t\in[0,T]$,
\bal\label{eq5-0}\begin{split}
&||u^{\omega,n}||_{B^{s+1}_{2,2}}+||b^{l,\omega,n}||_{B^{s+1}_{2,2}}\leq C||b^{l,\omega,n}_0||_{B^{s+1}_{2,2}}\leq Cn^{-1+\frac32\delta},
\\&||b^{h,\omega,n}||_{H^r}\leq Cn^{r-s}, \qquad ||b^{h,\omega,n}||_{L^\infty}\leq Cn^{-1}.
\end{split}\end{align}
For $E^{\omega,n}$, by Lemma \ref{le1}, we have
\bal\label{eq5-0.1}\begin{split}
||E^{\omega,n}||_{B^{s-1}_{2,2}}&\leq C||b^{l,\omega,n}||_{B^{s}_{2,2}}||b^{h,\omega,n}||_{B^{s-1}_{2,2}}+C||b^{h,\omega,n}||_{L^\infty}||b^{h,\omega,n}||_{B^{s-1}_{2,2}}
\\&\leq Cn^{-1}\big(n^{-1+\frac32\delta}+n^{-\delta}\big).
\end{split}\end{align}
Now, we estimate $F^{\omega,n}$ . Direct calculation shows that
\bbal
\Big(\pa_tb^{h,\omega,n}+u^{\omega,n}\cd \na b^{h,\omega,n}\Big)_3(t,x)=0,
\end{align*}

\bbal
\Big(b^{h,\omega,n}\Big)_1&=n^{-\delta-s}\phi(\frac{x_1}{n^\delta})\phi(\frac{x_2}{n^\delta})\cos(n x_2-\omega t)\phi(x_3)
\\&\quad+n^{-2\delta-s-1}\phi(\frac{x_1}{n^\delta})\phi'(\frac{x_2}{n^\delta})\sin(n x_2-\omega t)\phi(x_3),
\end{align*}

\bbal
\Big(b^{h,\omega,n}\Big)_2=-n^{-2\delta-s-1}\phi'(\frac{x_1}{n^\delta})\phi(\frac{x_2}{n^\delta})\sin(n x_2-\omega t)\phi(x_3),
\end{align*}

\bbal
\Big(\pa_tb^{h,\omega,n}\Big)_1(t,x)&=\omega n^{-s-\delta}\phi(\frac{x_1}{n^\delta})\phi(\frac{x_2}{n^\delta})\sin(nx_2-\omega t)\phi(x_3)
\\& \quad -\omega n^{-s-1-2\delta}\phi(\frac{x_1}{n^\delta})\phi'(\frac{x_2}{n^\delta})\cos(nx_2-\omega t)\phi(x_3),
\end{align*}

\bbal
\Big(\pa_tb^{h,\omega,n}\Big)_2(t,x)=\omega n^{-s-1-2\delta}\phi'(\frac{x_1}{n^\delta})\phi(\frac{x_2}{n^\delta})\cos(nx_2-\omega t)\phi(x_3),
\end{align*}

\bbal
\Big(u^{\omega,n}\cd \na b^{h,\omega,n}\Big)_1(t,x)&=-n^{-s+1-\delta}u^{\omega,n}_2(t,x)\phi(\frac{x_1}{n^\delta})\phi(\frac{x_2}{n^\delta})\sin(nx_2-\omega t)\phi(x_3)
\\&\quad +n^{-3\delta-s-1}u^{\omega,n}_2(t,x)\phi(\frac{x_1}{n^\delta})\phi''(\frac{x_2}{n^\delta})\sin(n x_2-\omega t)\phi(x_3)
\\&\quad +2n^{-2\delta-s}u^{\omega,n}_2(t,x)\phi(\frac{x_1}{n^\delta})\phi'(\frac{x_2}{n^\delta})\cos(n x_2-\omega t)\phi(x_3)
\\&\quad +n^{-2\delta-s}u^{\omega,n}_1(t,x)\phi'(\frac{x_1}{n^\delta})\phi(\frac{x_2}{n^\delta})\cos(n x_2-\omega t)\phi(x_3)
\\&\quad +n^{-3\delta-s-1}u^{\omega,n}_1(t,x)\phi'(\frac{x_1}{n^\delta})\phi'(\frac{x_2}{n^\delta})\sin(n x_2-\omega t)\phi(x_3)
\\&\quad +n^{-\delta-s}u^{\omega,n}_3(t,x)\phi(\frac{x_1}{n^\delta})\phi(\frac{x_2}{n^\delta})\cos(n x_2-\omega t)\pa_3\phi(x_3)
\\&\quad +n^{-2\delta-s-1}u^{\omega,n}_3(t,x)\phi(\frac{x_1}{n^\delta})\phi'(\frac{x_2}{n^\delta})\sin(n x_2-\omega t)\pa_3\phi(x_3),
\end{align*}

\bbal
\Big(u^{\omega,n}\cd \na b^{h,\omega,n}\Big)_2(t,x)&=-n^{-3\delta-s-1}u^{\omega,n}_1(t,x)\phi''(\frac{x_1}{n^\delta})\phi(\frac{x_2}{n^\delta})\sin(n x_2-\omega t)\phi(x_3)
\\&\quad -n^{-3\delta-s-1}u^{\omega,n}_2(t,x)\phi'(\frac{x_1}{n^\delta})\phi'(\frac{x_2}{n^\delta})\sin(n x_2-\omega t)\phi(x_3)
\\&\quad -n^{-2\delta-s}u^{\omega,n}_2(t,x)\phi'(\frac{x_1}{n^\delta})\phi(\frac{x_2}{n^\delta})\cos(n x_2-\omega t)\phi(x_3)
\\&\quad -n^{-2\delta-s-1}u^{\omega,n}_3(t,x)\phi'(\frac{x_1}{n^\delta})\phi(\frac{x_2}{n^\delta})\sin(n x_2-\omega t)\pa_3\phi(x_3).
\end{align*}
For the first component of $F^{\omega,n}$. It is easy to see that
\bbal
&\quad \omega n^{-s-\delta}\phi(\frac{x_1}{n^\delta})\phi(\frac{x_2}{n^\delta})\sin(nx_2-\omega t)\phi(x_3)
\\&=n^{-s+1-\delta}u^{\omega,n}_2(0,x)\phi(\frac{x_1}{n^\delta})\phi(\frac{x_2}{n^\delta})\sin(nx_2-\omega t)\phi(x_3),
\end{align*}
we can obtain
\bbal
&\quad\Big(\pa_tb^h+u^{\omega,n}\cd \na b^{h,\omega,n}\Big)_1(t,x)
\\&=-n^{-s+1-\delta}[u^{\omega,n}_2(t,x)-u^{\omega,n}_2(0,x)]\phi(\frac{x_1}{n^\delta})\phi(\frac{x_2}{n^\delta})\sin(nx_2-\omega t)\phi(x_3)
\\& \quad -\omega n^{-s-1-2\delta}\phi(\frac{x_1}{n^\delta})\phi'(\frac{x_2}{n^\delta})\cos(nx_2-\omega t)\phi(x_3)
\\&\quad +n^{-3\delta-s-1}u^{\omega,n}_2(t,x)\phi(\frac{x_1}{n^\delta})\phi''(\frac{x_2}{n^\delta})\sin(n x_2-\omega t)\phi(x_3)
\\&\quad +n^{-2\delta-s}u^{\omega,n}_1(t,x)\phi'(\frac{x_1}{n^\delta})\phi(\frac{x_2}{n^\delta})\cos(n x_2-\omega t)\phi(x_3)
\\&\quad +n^{-3\delta-s-1}u^{\omega,n}_1(t,x)\phi'(\frac{x_1}{n^\delta})\phi'(\frac{x_2}{n^\delta})\sin(n x_2-\omega t)\phi(x_3)
\\&\quad
+2n^{-2\delta-s}u^{\omega,n}_2(t,x)\phi(\frac{x_1}{n^\delta})\phi'(\frac{x_2}{n^\delta})\cos(n x_2-\omega t)\phi(x_3)
\\&\quad +n^{-\delta-s}u^{\omega,n}_3(t,x)\phi(\frac{x_1}{n^\delta})\phi(\frac{x_2}{n^\delta})\cos(n x_2-\omega t)\pa_3\phi(x_3)
\\&\quad +n^{-2\delta-s-1}u^{\omega,n}_3(t,x)\phi(\frac{x_1}{n^\delta})\phi'(\frac{x_2}{n^\delta})\sin(n x_2-\omega t)\pa_3\phi(x_3):=\sum^8_{i=1}F_{1,1}^{\omega,n}(i).
\end{align*}
To estimathe $F_{1,1}^{\omega,n}(1)$, similarly to the 2D case, we get
\bal\label{eq5-1}
&||u^{\omega,n}(t,x)-u^{\omega,n}(0,x)||_{B^{s}_{2,2}} \nonumber\\
&\leq ||u^{\omega,n}(t,x)-e^{t\Delta}u^{\omega,n}(0,x)||_{B^{s}_{2,2}}+||e^{t\Delta}u^{\omega,n}(0,x)-u^{\omega,n}(0,x)||_{B^{s}_{2,2}} \nonumber\\
&\leq ||\int_0^t e^{(t-\tau)\Delta} \mathbf{P} (-u^{\omega,n}\cd \na u^{\omega,n}+b^{l,\omega,n}\cd\na b^{l,\omega,n}) d\tau ||_{B^{s}_{2,2}} \nonumber\\
& \quad + ||\int_0^t \frac{d}{d\tau} (e^{\tau \Delta} u^{\omega,n}(0,x)) d\tau||_{B^{s}_{2,2}} \nonumber\\
&\leq C||-u^{\omega,n}\cd \na u^{\omega,n}+b^{l,\omega,n}\cd\na b^{l,\omega,n}||_{B^{s}_{2,2}} + C||\Delta u^{\omega,n}(0,x)||_{B^{s}_{2,2}} \nonumber\\
&\leq Cn^{2(-1+\frac32\delta)}+C n^{-1-\frac12\delta}.
\end{align}
By Lemma \ref{le1} and \eqref{eq5-1}, we have
\bbal
&||F_{1,1}^{\omega,n}(1)||_{B^{s-1}_{2,2}}\lesssim n^{-2+3\delta}+n^{-1-\frac12\delta},
\\&||F_{1,1}^{\omega,n}(2)||_{B^{s-1}_{2,2}}\lesssim n^{-2-\delta},
\\&||F_{1,1}^{\omega,n}(3)||_{B^{s-1}_{2,2}}\lesssim n^{-2-\delta},
\\&||F_{1,1}^{\omega,n}(4)||_{B^{s-1}_{2,2}}\lesssim n^{-1-\delta},
\\&||F_{1,1}^{\omega,n}(5)||_{B^{s-1}_{2,2}}\lesssim n^{-2-2\delta},
\\&||F_{1,1}^{\omega,n}(6)||_{B^{s-1}_{2,2}}\lesssim n^{-1-\delta},
\\&||F_{1,1}^{\omega,n}(7)||_{B^{s-1}_{2,2}}\lesssim n^{-2+\frac32\delta},
\\&||F_{1,1}^{\omega,n}(8)||_{B^{s-1}_{2,2}}\lesssim n^{-2-\delta},
\end{align*}
which yield
\bal\label{eq5-2}
||F^{\omega,n}_{1,1}||_{B^{s-1}_{2,2}}=||\sum^8_{i=1}F_{1,1}^{\omega,n}(i)||_{B^{s-1}_{2,2}}\leq Cn^{-1}\big(n^{-1+3\delta}+n^{-\frac12\delta}\big).
\end{align}
For the second component of $F^{\omega,n}$, we also have
\bbal
&\quad\Big(\pa_tb^h+u^{\omega,n}\cd \na b^{h,\omega,n}\Big)_2(t,x)\\&=\omega n^{-s-1-2\delta}\phi'(\frac{x_1}{n^\delta})\phi(\frac{x_2}{n^\delta})\cos(nx_2-\omega t)\phi(x_3)
\\&\quad-n^{-3\delta-s-1}u^{\omega,n}_1(t,x)\phi''(\frac{x_1}{n^\delta})\phi(\frac{x_2}{n^\delta})\sin(n x_2-\omega t)\phi(x_3)
\\&\quad -n^{-3\delta-s-1}u^{\omega,n}_2(t,x)\phi'(\frac{x_1}{n^\delta})\phi'(\frac{x_2}{n^\delta})\sin(n x_2-\omega t)\phi(x_3)
\\&\quad -n^{-2\delta-s}u^{\omega,n}_2(t,x)\phi'(\frac{x_1}{n^\delta})\phi(\frac{x_2}{n^\delta})\cos(n x_2-\omega t)\phi(x_3)
\\&\quad -n^{-2\delta-s-1}u^{\omega,n}_3(t,x)\phi'(\frac{x_1}{n^\delta})\phi(\frac{x_2}{n^\delta})\sin(n x_2-\omega t)\pa_3\phi(x_3):=\sum^5_{i=1}F_{1,2}^{\omega,n}(i).
\end{align*}
By Lemma \ref{le1}, we obtain
\bbal
&||F_{1,2}^{\omega,n}(1)||_{B^{s-1}_{2,2}}\lesssim n^{-2-\delta},
\\&||F_{1,2}^{\omega,n}(2)||_{B^{s-1}_{2,2}}\lesssim n^{-2-2\delta},
\\&||F_{1,2}^{\omega,n}(3)||_{B^{s-1}_{2,2}}\lesssim n^{-2-2\delta},
\\&||F_{1,2}^{\omega,n}(4)||_{B^{s-1}_{2,2}}\lesssim n^{-1-\delta},
\\&||F_{1,2}^{\omega,n}(5)||_{B^{s-1}_{2,2}}\lesssim n^{-2-\delta},
\end{align*}
which yield
\bal\label{eq5-3}
||F^{\omega,n}_{1,2}||_{B^{s-1}_{2,2}}=||\sum^4_{i=1}F_{1,2}^{\omega,n}(i)||_{B^{s-1}_{2,2}}\leq Cn^{-1-\delta}.
\end{align}
Thus, applying Lemma \ref{le1}, \eqref{eq5-0} and \eqref{eq5-2}-\eqref{eq5-3}, we obtain
\bal\begin{split}\label{eq5-4}
||F^{\omega,n}||_{B^{s-1}_{2,2}}&\leq ||F^{\omega,n}_{1,1}||_{B^{s-1}_{2,2}}+ ||F^{\omega,n}_{1,2}||_{B^{s-1}_{2,2}}+||b^{h,\omega,n}||_{B^{s-1}_{2,2}}||u^{\omega,n}||_{B^{s+1}_{2,2}}
\\&\leq Cn^{-1}\big(n^{-1+3\delta}+n^{-\frac12\delta}\big).
\end{split}\end{align}

Let $(u_{\omega,n},b_{\omega,n})$ be the unique solution of 3D non-resistive MHD equations. Namely,
\begin{equation*}\begin{cases}
\partial_tu_{\omega,n}+u_{\omega,n}\cdot\nabla u_{\omega,n}-\Delta u_{\omega,n}+\nabla P_{\omega,n}=b_{\omega,n}\cdot \nabla b_{\omega,n}, \\
\partial_tb_{\omega,n}+u_{\omega,n}\cdot \nabla b_{\omega,n}=b_{\omega,n}\cdot\nabla u_{\omega,n},\\
\mathrm{div} u_{\omega,n}=\mathrm{div} b_{\omega,n}=0,\quad (u_{\omega,n},b_{\omega,n})|_{t=0}=(b^{l,\omega,n}_0,b^{l,\omega,n}_0+b^{h,\omega,n}(0,x)).
\end{cases}\end{equation*}
By the well-posedness result, the $(u_{\omega,n},b_{\omega,n})$ belong to $C([0,T];B^s_{2,2}\times B^s_{2,2})$ and have common lifespan $T\simeq 1$. Notice that $b^{h,1,n}(0,x)=b^{h,-1,n}(0,x)$, we obtain
\bal\begin{split}\label{eq5-5}
&\qquad||u_{1,n}(0,x)-u_{-1,n}(0,x)||_{B^{s}_{2,2}}+||b_{1,n}(0,x)-b_{-1,n}(0,x)||_{B^{s}_{2,2}}
\\&\leq C||b^{l,1,n}_0-b^{l,-1,n}_0||_{B^{s}_{2,2}}\leq Cn^{-1+\frac32\delta}\rightarrow 0, \qquad n\rightarrow \infty.
\end{split}\end{align}
By Lemma \ref{le-estimate}, we get from \eqref{eq3-0} that
\bal\begin{split}\label{eq5-6}
&||u_{\omega,n}||^2_{L^\infty_T(B^{s-1}_{2,2})}+||\nabla u_{\omega,n}||^2_{L^2_T(B^{s-1}_{2,2})}+||b_{\omega,n}||^2_{L^\infty_T(B^{s-1}_{2,2})}\leq Cn^{-2+3\delta},
\\&||u_{\omega,n}||^2_{L^\infty_T(B^{s}_{2,2})}+||\nabla u_{\omega,n}||^2_{L^2_T(B^{s}_{2,2})}+||b_{\omega,n}||^2_{L^\infty_T(B^{s}_{2,2})}\leq C,
\\&||u_{\omega,n}||^2_{L^\infty_T(B^{s+1}_{2,2})}+||\nabla u_{\omega,n}||^2_{L^2_T(B^{s+1}_{2,2})}+||b_{\omega,n}||^2_{L^\infty_T(B^{s+1}_{2,2})}\leq Cn^2
\end{split}\end{align}
Next, considering $v=u_{\omega,n}-u^{\omega,n}$ and $a=b_{\omega,n}-b^{\omega,n}$, we observe that $(v,a)$ satifies
\begin{equation*}\begin{cases}
\partial_tv-\Delta v+\nabla \widetilde{P}_{\omega,n}=\D E^{\omega,n}+\D G, \\
\partial_ta+u_{\omega,n}\cdot \nabla a=F^{\omega,n}+J,\\
\mathrm{div} v=\mathrm{div} a=0,\quad (v,a)|_{t=0}=(0,0),
\end{cases}\end{equation*}
where
\bbal
&G=\underbrace{b_{\omega,n}\otimes a+a\otimes b^{\omega,n}}_{G_1}\underbrace{-v\otimes u^{\omega,n}-u_{\omega,n}\otimes v}_{G_2}, \
\\& J=\underbrace{b_{\omega,n}\cdot\nabla v}_{J_1}+\underbrace{a\cdot\nabla u^{\omega,n}}_{J_2}\underbrace{-v\cd\na b^{\omega,n}}_{J_3}.
\end{align*}
Next, we shall bound the term $(v,a)$ in Sobolev $B^{s-1}_{2,2}$ norm. By Lemma \ref{le-estimate}, we obtain
\bal\begin{split}\label{eq5-7}
&\quad\frac{\dd}{\dd t}\big(||v||^2_{B^{s-1}_{2,2}}+||a||^2_{B^{s-1}_{2,2}}\big)+||\nabla v||^2_{B^{s-1}_{2,2}}
\\&\leq C||E^{\omega,n}||^2_{B^{s-1}_{2,2}}+C||G||^2_{B^{s-1}_{2,2}}+C||F^{\omega,n}||_{B^{s-1}_{2,2}}||a||_{B^{s-1}_{2,2}}
\\&\quad+C||J||_{B^{s-1}_{2,2}}||a||_{B^{s-1}_{2,2}}+\frac14||\nabla v||^2_{B^{s-1}_{2,2}}.
\end{split}\end{align}
By Lemma \ref{le1}, \eqref{eq5-0} and \eqref{eq5-6}, we get
\bal\label{eq5-8}\begin{split}
&||G_1||_{B^{s-1}_{2,2}}\leq C||a||_{B^{s-1}_{2,2}}\big(||b^{\omega,n}||_{B^{s}_{2,2}}+||b_{\omega,n}||_{B^{s}_{2,2}}\big),
\\&||G_2||_{B^{s-1}_{2,2}}\leq C||a||_{B^{s-1}_{2,2}}\big(||u^{\omega,n}||_{B^{s}_{2,2}}+||u_{\omega,n}||_{B^{s}_{2,2}}\big),
\\&||J_1||_{B^{s-1}_{2,2}}\leq C||b_{\omega,n}||_{B^{s}_{2,2}}||\nabla v||_{B^{s-1}_{2,2}},
\\&||J_2||_{B^{s-1}_{2,2}}\leq C||a||_{B^{s-1}_{2,2}}||u^{\omega,n}||_{B^{s+1}_{2,2}},
\\&||J_3||_{B^{s-1}_{2,2}}\leq C||v||_{B^{s}_{2,2}}||b^{\omega,n}||_{B^{s}_{2,2}}\leq C||v,\nabla v||_{B^{s-1}_{2,2}}||b^{\omega,n}||_{B^{s}_{2,2}}.
\end{split}\end{align}
Plugging \eqref{eq5-8} into \eqref{eq5-7} and applying \eqref{eq5-0.1}, \eqref{eq5-4}, we obtain for all $t\in[0,T]$,
\bal\label{eq5-9}\begin{split}
&\quad\frac{\dd}{\dd t}\big(||v||^2_{B^{s-1}_{2,2}}+||a||^2_{B^{s-1}_{2,2}}\big)
\\&\leq C\big(||v||^2_{B^{s-1}_{2,2}}+||a||^2_{B^{s-1}_{2,2}}\big)+||E^{\omega,n}||^2_{B^{s-1}_{2,2}}+||F^{\omega,n}||^2_{B^{s-1}_{2,2}}
\\&\leq C\big(||v||^2_{B^{s-1}_{2,2}}+||a||^2_{B^{s-1}_{2,2}}\big)+Cn^{-2}\big(n^{-1+3\delta}+n^{-\frac12\delta}\big)^2.
\end{split}\end{align}
This alongs with Growall's inequallity yields
\bal\label{eq5-9-1}
||v||^2_{L^\infty_T(B^{s-1}_{2,2})}+||a||^2_{L^\infty_T(B^{s-1}_{2,2})}\leq Cn^{-2}\big(n^{-1+3\delta}+n^{-\frac12\delta}\big)^2.
\end{align}
By \eqref{eq5-0}, \eqref{eq5-6}, \eqref{eq5-9-1} and using the interpolation inequality, we have
\bal\label{eq5-10}\begin{split}
&\quad||v||^2_{{L^\infty_T(B^{s}_{2,2})}}+||a||^2_{{L^\infty_T(B^{s}_{2,2})}}
\\&\leq \big(||v||^2_{L^\infty_T(B^{s-1}_{2,2})}+||a||^2_{L^\infty_T(B^{s-1}_{2,2})}\big)^{\frac12}\big(||v||^2_{L^\infty_T(B^{s+1}_{2,2})}+||a||^2_{L^\infty_T(B^{s+1}_{2,2})}\big)^{\frac12}
\\&\leq C\big(n^{-1+3\delta}+n^{-\frac12\delta}\big).
\end{split}\end{align}
Then, combining \eqref{eq5-0} and \eqref{eq5-10}, we get
\begin{align*}\begin{split}
&\quad||u_{1,n}(t)-u_{-1,n}(t)||_{B^{s}_{2,2}}+||b_{1,n}(t)-b_{-1,n}(t)||_{B^{s}_{2,2}}
\\&\geq ||u^{1,n}(t)-u^{-1,n}(t)||_{B^{s}_{2,2}}+||b^{1,n}(t)-b^{-1,n}(t)||_{B^{s}_{2,2}}-C\varepsilon_n
\\&\geq ||b^{h,1,n}(t)-b^{h,-1,n}(t)||_{B^{s}_{2,2}}-C\varepsilon'_n
\\&\geq 2|\sin t|\cdot||n^{-\delta-s}\phi(\frac{x_1}{n^\delta})\phi(\frac{x_2}{n^\delta})\sin(n x_2)\phi(x_3)||_{B^{s}_{2,2}}-C\varepsilon'_n
\\&\geq c_0|\sin t|\cdot||n^{-\frac12\delta}\phi(\frac{x_1}{n^\delta})||_{H^{s}(\R)}||n^{-\frac12\delta-s}\phi(\frac{x_2}{n^\delta})\sin(n x_2)||_{H^{s}(\R)}||\phi(x_3)||_{H^s(\R)}-C\varepsilon'_n,
\end{split}\end{align*}
where
\bbal
\varepsilon_n=(n^{-\frac12\delta}+n^{3\delta-1})^{\frac12}, \qquad \varepsilon'_n=(n^{-\frac12\delta}+n^{3\delta-1})^{\frac12}+n^{\frac32\delta-1}.
\end{align*}
Letting $n$ go to $\infty$, then there exists some positive constant $\widetilde{c_0}$ such that
\begin{align}\label{eq5-11}\begin{split}
&\liminf_{n\rightarrow \infty}\big(||u_{1,n}(t)-u_{-1,n}(t)||_{B^{s}_{2,2}}+||b_{1,n}(t)-b_{-1,n}(t)||_{B^{s}_{2,2}}\big)
\\&\geq \widetilde{c_0}|\sin t|.
\end{split}\end{align}
Then \eqref{eq5-11} together with \eqref{eq5-5} complete the proof of 3D case.

\vspace*{1em}
\noindent\textbf{Acknowledgements.}  J. Li was partially supported by NSFC (No.11801090) and Z. Yin was partially supported by NSFC (No.11671407 and No.11271382),  FDCT (No. 098/2013/A3), Guangdong Special Support Program (No. 8-2015), and the key project of NSF of  Guangdong province (No. 2016A030311004).

\end{document}